\documentclass[a4paper, twoside]{article}

\usepackage[
	top = 1.15 in, 
	bottom = 1.25 in,
	left = 1.15 in, 
	right = 1.15 in,
	includehead]{geometry}
\usepackage{scrextend}
\changefontsizes{11 pt}

\usepackage{microtype}
\usepackage[all]{nowidow}
\usepackage[utf8x]{inputenc}
\usepackage{microtype}
\usepackage{mathptmx}
\usepackage[T1]{fontenc}

\usepackage{booktabs}

\usepackage[utf8x]{inputenc}
\usepackage{amsfonts,amscd,amssymb,amsmath,amsrefs}
\usepackage{graphicx}
\usepackage[british]{babel}
\usepackage{caption}
\usepackage{mathdots}
\usepackage{mathtools} 
\usepackage{subfigure}

\usepackage{makeidx}
\usepackage{multicol}
\usepackage{array}
\usepackage[pdf,all]{xy}
\xyoption{line}
\CompileMatrices

\usepackage{sidecap}


\mathchardef\hy="2D 

\usepackage{pgf,tikz}
\usepackage{tikz-cd}
\usetikzlibrary{calc}
\usetikzlibrary{matrix,arrows}
\usepackage{stackrel}
\usepackage[shortlabels]{enumitem} 
\usepackage{stmaryrd} 
\usepackage{setspace} 
\spacing{1.15}
\usepackage{etoolbox}
\usetikzlibrary{trees}

\patchcmd{\section}{\normalfont}{\normalfont\large}{}{}

\usepackage[bb=ams, cal=euler, scr=rsfso , frak=euler]{mathalpha}

\usepackage{fancyhdr}
\pagestyle{fancy}
\fancyhead[RE]{\small\it The Tamarkin--Tsygan calculus of an algebra}
\fancyhead[LO]{\small\it  Pedro Tamaroff}
\fancyhead[RO,LE]{\small\bf\thepage}

\cfoot{}

\fancypagestyle{references}
{\fancyhead[RE]{\small\it References}
\fancyhead[LO]{\small\it References}
\fancyhead[RO,LE]{\small\bf\thepage}
\fancyfoot[L,R,C]{}
}


\DeclareMathOperator{\cone}{cone}

\newcommand{\Ad}{\operatorname{Ad}}
 


\usepackage{enumitem}
\listfiles
\setlist[enumerate]{label= (\arabic*)}

\newenvironment{tenumerate}{
 \begin{enumerate}
  \setlength{\itemsep}{0pt}
  \setlength{\parskip}{0pt}
}{\end{enumerate}}

\newenvironment{titemize}{
\begin{itemize}
  \setlength{\itemsep}{0pt}
  \setlength{\parskip}{0pt}
}{\end{itemize}}

\definecolor{trinityblue}{rgb}{0.05, 0.45,0.75}
\definecolor{newcol}{rgb}{0,0,0}
\definecolor{deepblue}{rgb}{0.05, 0.45,0.75}
\DeclareTextFontCommand{\new}{\color{black}\em}

\usepackage[pdftex, colorlinks,bookmarks 
= true,bookmarksnumbered = true]{hyperref}

\hypersetup{colorlinks,linkcolor={deepblue},
	citecolor={deepblue},urlcolor={deepblue}}  
\usepackage{sectsty}
\chapterfont{\color{newcol}}  
\sectionfont{\color{newcol}}  
\subsectionfont{\color{newcol}}  

\makeatletter
\AtBeginDocument{%
  \let\[\@undefined
  
\DeclareRobustCommand{\[}{\begin{equation}}%
  \let\]\@undefined
  
\DeclareRobustCommand{\]}{\end{equation}}%
}
\makeatother 
\mathtoolsset{showonlyrefs,showmanualtags}

\usepackage{amsthm}
\usepackage{thmtools}
\newtheoremstyle{mytheorem}
  {\topsep}   
  {\topsep}   
  {\itshape}  
  {0pt}       
  {\bfseries\color{newcol}} 
  {\color{newcol}}         
  {5pt plus 1pt minus 1pt} 
  {}          
  
\theoremstyle{mytheorem}
\newtheorem{theorem}{Theorem}[section]
\newtheorem{proposition}[theorem]{Proposition}
\newtheorem{corollary}[theorem]{Corollary}
\newtheorem{lemma}[theorem]{Lemma}
\newtheorem*{conj*}{Conjecture}

 \newtheoremstyle{introthm}
  {\topsep}   
  {\topsep}   
  {\itshape}  
  {0pt}       
  {\bfseries\color{newcol}} 
  {\color{newcol}{.}}         
  {5pt plus 1pt minus 1pt} 
  {}          

\theoremstyle{introthm}


\newtheoremstyle{mydefinition}
  {\topsep}   
  {\topsep}   
  {}  
  {0pt}       
  {\bfseries\color{newcol}} 
  {\color{newcol}}         
  {5pt plus 1pt minus 1pt} 
  {}          
  
\theoremstyle{mydefinition}

\newtheoremstyle{mydefinition2}
  {\topsep}   
  {\topsep}   
  {}  
  {0pt}       
  {\bfseries\color{newcol}} 
  {\color{newcol}{.}}         
  {5pt plus 1pt minus 1pt} 
  {}          
  
\theoremstyle{mydefinition2}
\newtheorem*{definition*}{Definition}
\newtheorem*{remark*}{Remark}
\newtheorem*{obs*}{Observation}
\newtheorem*{example*}{Example}
\newtheorem{remark}{Remark}


\theoremstyle{plain} 
\newcommand{\thistheoremname}{}
\newtheorem{genericthm}[theorem]{\thistheoremname}

\newcommand{\imor}{\interleave\kern-.45em\longrightarrow}


\newcommand{\vv}{\vert}

\newcommand{\Der}{\operatorname{Der}}
\newcommand{\HH}{\mathrm{HH}}

\newcommand{\Com}{\mathsf{Com}}
\newcommand{\Ass}{\mathsf{Ass}}
\newcommand{\Lie}{\mathsf{Lie}}
\newcommand{\Gers}{\mathsf{Gers}}
\newcommand{\PCalc}{\mathsf{PreCalc}}
\newcommand{\Calc}{\mathsf{Calc}}
\newcommand{\D}{\mathsf{Diff}}
\newcommand{\Br}{\mathsf{Br}}

\newcommand{\Vect}{\mathsf{Vect}}

\newcommand{\SMod}{{}_\Sigma\mathsf{dgMod}}

\newcommand{\Alg}{\mathsf{Alg}}


\renewcommand{\tt}{\otimes}

\newcommand{\Tor}{\operatorname{Tor}}
\newcommand{\End}{\operatorname{End}}

\newcommand{\Ho}{\operatorname{Ho}}

\definecolor{sqsqsq}{rgb}{0.13,0.13,0.13}
\definecolor{aqaqaq}{rgb}{0.63,0.63,0.63}

\newcommand{\HC}{\mathrm{HC}}
\newcommand{\uk}{\underline{\kk}}

\newcommand{\Uupsilon}{\Upsilon}

\newcommand{\antishriek}{\text{\raisebox{\depth}{\textexclamdown}}}

\newcommand{\T}{\Theta}
\newcommand{\X}{\mathcal{X}}

\newcommand{\NN}{\mathbb N}

\newcommand{\kk}{\Bbbk}

\definecolor{newterm-color}{RGB}{0, 0, 0}

\theoremstyle{mytheorem}
\newtheorem*{theorem*}{Theorem}


\theoremstyle{plain} 
\renewenvironment{abstract}{%
\small\begin{center}
\begin{minipage}{.9\textwidth}
\small
}
{\par\noindent\end{minipage}\end{center}\vspace{3 em}}
\makeatletter
\renewcommand\@maketitle{%
\hfill
\begin{center}\begin{minipage}{0.9 	\textwidth}
\centering
\vskip 6em
\let\footnote\thanks 
{\LARGE \@title \par }
\vspace{1 em}
\vskip 1 em
{\large \@author \par}
\vspace{3.5 em}

\end{minipage}\end{center}
\par
}
\makeatother


\tikzcdset{arrow style=tikz, diagrams={>=stealth}}

\usepackage{titletoc}

\titlecontents{section}
[0.2em]
{\small}
{\thecontentslabel.\hspace{3pt}}
{}
{\enspace\titlerule*[0.5pc]{.}\contentspage}
\titlecontents*{subsection}
[1em]
{\footnotesize}
{\thecontentslabel. \hspace{3pt}}
{}
{}
[ --- \ ]
[]

\setlength\parskip{3 pt}
\setlength\parindent{0 em}

\raggedbottom 

\title{\vspace{- 1.25 in}\setstretch{0.85}{\textbf{The Tamarkin--Tsygan calculus of an
associative algebra \`a la Stasheff}}}
\author{\textsc{{Pedro Tamaroff}}}
\date{}
\begin{document}
\maketitle

\begin{abstract}
We show how to compute the Tamarkin--Tsygan
calculus of an associative algebra by providing,
for a given cofibrant replacement of it, a `small'
$\Calc_\infty$-model of its calculus, which we make
somewhat explicit at the level of $\Calc$-algebras.
To do this, we prove that the operad $\Calc$ is
inhomogeneous Koszul; to our best knowledge, this
result is new. We illustrate our technique by
carrying out some computations for two
monomial associative {algebras} 
using the cofibrant
replacement obtained by the author
in~\cite{Tamaroff}. 
\end{abstract}

\section{Introduction}
To every associative algebra we may associate its Hochschild homology 
and cohomology groups. These are a priori graded spaces, but in fact
are acted upon by several operads.
In the simplest level, Hochschild
cohomology is a graded commutative algebra under the cup product,
and in fact a Gerstenhaber algebra, and Hochschild homology is
a module for both of these algebra structures. It is the case that the
Hochschild complex admits higher brace operations~\cite{Braces},
refining the Gerstenhaber bracket, and the dg operad of such braces along with the cup product is quasi-isomorphic to the dg operad of singular chains on the little disks
operad. In this way McClure and Smith give, in~\cite{Braces}, a
solution to Deligne's conjecture. Another approach to the solution of the
conjecture, preceding that of McClure
and Smith, was
proposed by Tamarkin in~\cite{DeligneTam}. In~\cite{HinichTam}, Hinich provided further
details to the approach of Tamarkin.

As originally observed in~\cite{Daletskii}, there is another operad that acts on Hochschild cohomology and
homology, the 2-colored operad of Tamarkin--Tsygan
calculi~\cite{TT}*{\S 3.6},
which yields for every associative
algebra $A$ a pair
\[ \Calc_A = (\HH^*(A),\HH_*(A)),\]
which we call the \emph{Tamarkin--Tsygan calculus of $A$}. In this paper we focus our attention on giving formulas for
the action of it in terms of cofibrant resolutions in $\Alg$, the
category of dga algebras with the projective model structure or, what
is the same, a \emph{homotopy invariant} description of the action
on the chain level. Initially, we focused our attention on the
Gerstenhaber bracket on Hochschild cohomology, originally defined
in~\cite{Gers}, since computing the resulting Gerstenhaber algebra
structure on Hochschild cohomology has been of interest~\cite{MSA,Vol,NegWit,NegWitBr,WitBr}, and is agreed to be a
non-trivial task; the reason for this is nicely explained in~\cite{MSA}.

Quillen~\cite{QuillenCyclic}*{Part II, \S 3} identified
the Hochschild cochain complex of an algebra with the space
of coderivations of its bar construction. Later, in the
same spirit, Stasheff~\cite{Sheff} gave a definition of the
Gerstenhaber  bracket of an associative algebra as the Lie bracket
of coderivations of its bar construction, which deserves to be
thought of as intrinsic to the category of associative
algebras. Interest for a description of the Tamarkin--Tsygan
calculus of an associative algebra \`a la Stasheff appears
in~\cite{ZimmerRem}*{Remark 7}. Our main result here
is giving such an intrinsic description of the action of
$\mathsf{Calc}$ that also lends itself to computations.

To do this, consider a quasi-free associative algebra $B=(TV,d)$,
and the following complexes of nc poly vector fields
and of nc differential forms on $B$, respectively,
\[\X^*(B) =
	\cone(\Ad {}{\colon} B\longrightarrow \Der(B)),\quad
  \T_*(B) =
	\cone(C{}{\colon}V\otimes B\longrightarrow B){}{,}
		\]
where $C$ is the commutator map. The space $\T_*(B)$ is spanned by
nc-forms $\omega =b+b'dv$ where $b,b'\in B$ and $v\in V$ and
$\X^*(B)$ is spanned by nc-fields $X=\lambda + f$ where $\lambda
\in B$ and $f$ is a derivation.

\begin{theorem}
The Tamarkin--Tsygan calculus $\Calc_A$ of an associative algebra
$A$ can be computed using the datum
\[ (\X^*(B),\T_*(B)) \]
of nc poly vector fields and nc differential forms obtained from
a model $ B \longrightarrow A$. In terms these two complexes, if we write a poly vector field by $X=\lambda+f$ and a differential form
by $\omega = b+b'dv$, then

\begin{enumerate}
\setlength{\itemsep}{0pt}
  \setlength{\parskip}{0pt}
 \item The cup product can be computed through a brace operation
 		$\{X,Y;d\}$.
 \item The Lie bracket is the usual bracket of derivations{}{.}
 \item The boundary of $\omega$ is computed as
		\[d\omega = \sum_{i=1}^n (-1)^\varepsilon
				v_{i+1}\cdots v_nv_1\cdots v_{i-1}dv_i. \]
\end{enumerate}
\end{theorem}

\noindent To achieve this, we prove the seemingly new result
that the colored operad $\Calc$ is inhomogeneous Koszul. The
theory developed in~\cite{BV} to obtain resolutions of
inhomogeneous Koszul operads then allows us to produce a
cofibrant model $\Calc_\infty$ for $\Calc$ and endow the pair
\[(\X^*(B),\T_*(B))\] with a $\Calc_\infty$-algebra
structure.

\begin{theorem}
The operad $\Calc$ is inhomogeneous Koszul.
It admits a cofibrant model $\,\Calc_\infty$
of $\Calc$ with underlying symmetric sequence
isomorphic to $T(\delta)\circ\PCalc^{\emph{\antishriek}} $.
\end{theorem}

Here $\PCalc^\antishriek$ is the Koszul dual cooperad of the Koszul
operad $\PCalc$ and $T(\delta)$ is a polynomial algebra with
generator $\delta$ of degree $2$. Having this model for homotopy
coherent calculi, we gather the following results from~\cite{KS,TTD}:

\begin{theorem}
There is a dg colored operad $\mathsf{KS}$, the
Kontsevich--Soibelman operad, and a topological
colored operad $\mathsf{Cyl}$ such that
\begin{tenumerate}
\item The operad $C_*(\mathsf{Cyl})$ is formal and its homology
		is $\Calc$.
\item There is a quasi-isomorphism $\mathsf{KS} \longrightarrow
 		C_*(\mathsf{Cyl})$ of dg-operads.
\item $\mathsf{KS}$ acts on the pair $(C^*(-),C_*(-))$ in such a
		way that{}{.}
\item On homology we obtain the usual $\Calc$-algebra structure on
		$(\HH^*(-),\HH_*(-))$.
\end{tenumerate}
In particular, for every cofibrant replacement $Q$ of $\Calc$, the
pair $(C^*(A),C_*(A))$ is a $Q$-algebra which on homology gives
$\Calc_A$.\qed
\end{theorem}

Specializing this to $Q=\Calc_\infty$, we obtain on the pair
$(C^*(A),C_*(A))$ a $\Calc_\infty$-algebra which on homology gives
$\Calc_A$ and which we write $\Calc_{\infty,A}$. With this at hand,
the homotopy theory developed in~\cite{BV} for inhomogeneous Koszul
operads implies the following theorem.

\begin{theorem}
For any quasi-free model $B$ of $A$, the pair $(\X^*(B),\T_*(B))$
admits a $\Calc_\infty$-algebra structure which is
$\infty$-quasi-isomorphic to the $\Calc_\infty$-algebra
$\Calc_{\infty,A}$ so that the action of degree $0$ elements in
$\Calc_\infty$ is given by the formulas above. In particular,
this structure recovers $\Calc_A$ by taking homology.
\end{theorem}

We remark that computing  cofibrant resolutions in $\Alg$ is
remarkably complicated. The author has solved the problem of
computing models of \emph{monomial} algebras in~\cite{Tamaroff} and,
as explained there, using this and some ideas of deformation theory,
it is possible to attempt to compute models of certain algebras
with a chosen Gr\"obner basis, although the general description
of the minimal model of such algebras is, at the moment, missing.


In Section~\ref{sec:1} we recall the elements of Hochschild
(co)homology and necessary background on models of associative
algebras. In Section~\ref{sec:2} where we recall the basics of
(colored) operads, their algebras, and the relevant Koszul duality
theory for them, which we use to give a inhomogeneous Koszul model
of $\Calc$. In Section~\ref{sec:3}, we introduce the main algebraic
structure of our paper, Tamarkin--Tsygan calculi. We then give two
complexes to compute Hochschild (co)homology in terms of models,
and give the promised formulas for the Tamarkin{}{--}Tsygan calculus
of an algebra in terms of these. Finally, in Section~\ref{sec:4},
following the notion of Koszul $A_\infty$-algebras of
Berglund--B\"{o}rjeson~\cite{Alexander}, we show that the
Tamarkin--Tsygan calculus of an $A_\infty$-Koszul algebra
is dual to that of its Koszul dual algebra, extending a
result of Herscovich~\cite{Hers}. In Section~\ref{sec:4}
we give some examples of computations to illustrate our methods.

In what follows $\kk$ is a field of characteristic zero,
and all unadorned $\tt$ and $\hom$ are with respect to this
base field. All algebras are non-unital and non-negatively
homologically graded unless stated otherwise, and are defined
over the base field. Implicit signs follow the Koszul sign rule,
and we make them as explicit as we can. We will adhere to the
convention that $A$ will always denote a non-dg associative
algebra concentrated in homological degree $0$, while $B$ will
always denote a dg associative algebra concentrated in non-negative
degrees. We write $V^\#$ for the graded dual of a graded vector space.

\subsection*{Acknowledgements}
I am pleased to thank Vladimir
Dotsenko for his constant support during the preparation of
these notes, in particular for useful remarks and suggestions.
We thank R.~Campos, M.~Linckelmann, J.~Bellier-Mill\`{e}s
and G.~Zhou for useful comments and discussions,
and organizers of the meeting \emph{Higher structures} (2019, CIRM at Luminy) where many useful
conversations that contributed to the improvement of this paper took place.
I am indebted to Bernhard Keller for his
careful reading of a first version of this
paper, and for his very useful comments
and corrections during a visit of the author
to Paris~VII\@.
The second version of these notes was
produced while I was visiting
Alexander Berglund at the University of
Stockholm. I thank Alexander for very useful
conversations and remarks that allowed me to
improve the article, and the University
of Stockholm for providing excellent working
conditions. Finally, I thank
several anonymous referees for their
comments, questions, suggestions and corrections
that significantly improved this paper.

\section{Preliminaries}\label{sec:1}

In this section we quickly recall the classical
definitions of Hochschild homology and cohomology,
and cyclic homology of algebras.
For details, the reader is referred to
\cite{Loday} and \cite{McCleary}. We also recall the basics on models of associative algebras; these are
the objects we use to compute
their calculus. The reader familiar
with the theory may wish to skip
this section.  Throughout, we fix
an associative algebra $A$, which we always assume is concentrated in homological degree zero.

\subsection{Hochschild (co)homology}

Let us begin by
recalling what the Hochschild (co)chain
complex of a dga algebra
$(B,\partial_B)$ looks like.
For details, we
refer the reader to the book~\cite{McCleary}*{Section 7.1}
and also to~\cite{Hoss}*{Sections 1 and 2}.
To this end form the first quadrant double complex $C_*(B)_*$
so that for each $p,q\in\mathbb N_0$ we
have
\[
C_p(B)_q = (B\tt (s\overline{B})^{\tt p})_{p+q}, \]
the space of elements of $B\tt(s\overline{B})^{\tt p}$
of total homological degree $p+q$. Observe that
since we have shifted the degree of $\overline{B}$,
a generic element of bidegree $(p,q)$ is of the
form $b_0[b_1\vv\cdots \vv b_p]$ where
$\sum_{i=0}^p |b_i|= q$.

The horizontal differential is the Hochschild differential that
incorporates the Koszul signs of $B$, so that for
a generic basis element $b_0[b_1\vv\cdots\vv b_p]$ of
our double complex we have that

\begin{align*} d_h(b_0[b_1\vv\cdots\vv b_p]) =&
   -\overline{b_0}b_1[b_2\vv\cdots \vv b_p] 
   +\sum_{i=1}^{p-1} \overline{b_0}[\overline{b_1}\vv
   \cdots \vv \overline{b_i}b_{i+1}\vv\cdots \vv b_p]\\
   &+
     (-1)^{a|b_p|+|b_0|}b_pb_0[\overline{b_1}\vv\cdots \vv \overline{b_{p-1}}]{}{,}
\end{align*}

where 
we use the usual notation $\overline{b} =
(-1)^{|b|+1} b$, and $a = |b_0|+|b_1|+\cdots+|b_{p-1}|+p-1$. 
The vertical differential is simply
the induced differential on the tensor product
of complexes $B\otimes (s\overline{B})^{\tt p}$,
which incorporates Koszul signs and suspension signs.
Concretely, for
a generic basis element $b_0[b_1\vv\cdots\vv b_p]$ we have that
\[ d_v(b_0[b_1\vv\cdots\vv b_p]) =
   db_0[b_1\vv \cdots \vv b_p]
   -\sum_{i=1}^{p-1} \overline{b_0}[\overline{b_1}\vv
   \cdots \vv \overline{b_i}\vv db_{i+1}\vv\cdots \vv b_p]{}{.} \]

Dually, we can form the double complex
$C^*(B)^*$ so that for each $p,q\in \NN_0$ we have

\[ C^p(B)^q = 
	\hom((s\overline{B})^{\tt p},B)^{p+q},\]	
where the outer index indicates the cohomological
degree of a map. In this way, elements in $C^p(B)^q$
correspond to maps $\overline{B}^{\tt p} \longrightarrow B$
that lower the homological degree by $q$.
Observe this complex lives
in the first two quadrants, since $q$ can be negative.
For a homogeneous map 
$f {}{\colon} (s\overline{B})^{\tt p} \longrightarrow B$ and a basis
element $[b_1\vv \cdots \vv b_p]$ of $(s\overline{B})^{\tt p}$, we have

\begin{align*} 
(d_hf)[b_1\vv\cdots\vv b_p] =\ &
   (-1)^{|b_1||f|} {b_1}f[b_1\vv\cdots \vv b_p] 
+(-1)^{|f|}\sum_{i=1}^{p-1} f[\overline{b_1}\vv
   \cdots \vv \overline{b_i}b_{i+1}\vv\cdots \vv b_p]\\
   &+
     (-1)^{|f|}f[\overline{b_1}\vv\cdots \vv \overline{b_{p-2}}]b_{p-1}.
\end{align*}

\subsection{Models of associative algebras}

Let us write $\Alg$ for the category of
dga algebras. A surjective
quasi-isomorphism $B\longrightarrow A$ from
a quasi-free dg algebra $B=(TV,\partial_B)$ is a
\emph{model of $A$}. One usually calls $B$ a model of $A$,
without explicit mention to the map $B\longrightarrow A$
which is understood from context. We say a model is \emph{minimal} if $B$ is
\begin{tenumerate}
\item \emph{triangulated}: there is a gradation $V =
\bigoplus_{j\geqslant 1} V(j)$ such that for each $j\in\NN$,
we have that $ \partial_B(V^{(j+1)}) \subseteq T(V^{(\leqslant j)})$, and
\item its differential is \emph{decomposable}, that is, $\partial_B(V) \subseteq (TV)^{\geqslant 2}$.
\end{tenumerate}
There is a model structure on $\Alg$ whose weak equivalences
are quasi-isomorphisms, the fibrations are the degree-wise
surjections, and the cofibrant algebras are the retracts of
triangulated quasi-free algebras; see~\cite{HTHA}. In particular,
minimal algebras are cofibrant, and may be used to cofibrantly
resolve objects in $\Alg$. It is important
to recall that not every dga algebra admits
a minimal model; see the unpublished notes~\cite{KellerMod} for
details.
\[
\begin{tikzcd}[row sep = 4pt]
 B \arrow[dashed,dd]\arrow[dr]& {}  \\
 {} & A \\
 B' \arrow[ur]& {}
\end{tikzcd}\]
Minimal models of algebras, when they exist, are
unique up to unique isomorphism, meaning that any solid diagram
where the diagonal
arrows are minimal models, can be uniquely
completed to a homotopy commutative triangle where the vertical
dashed map is an isomorphism in $\Alg$. For brevity, we will use the term \emph{model} to speak about triangulated quasi-free algebras with homology concentrated in degree zero.

In~\cite{Tamaroff}, we obtain a description
of the minimal model of any monomial quiver algebra.
Concretely, the model is free over the quiver with
arrows the chains, also known as overlappings or ambiguities of the algebra, and the differential is given by
deconcatenation. If $\gamma$ is an
Anick chain of length $r$,
a decomposition of it is a sequence $(\gamma_1,\ldots,\gamma_n)$ of Anick
chains of lengths $r_1,\ldots,r_n$ so that
their concatenation, in this order, is $\gamma$, and
$r-1 = r_1+\cdots+r_n$. What follows is the main result
of that paper.

\begin{theorem}
For each monomial algebra $A$ there is a minimal model
$B\longrightarrow A$ where $B$ is the $\infty$-cobar construction on $\Tor_A(\kk,\kk)$. The differential $d$ is such that for a chain $\gamma\in\Tor_A(\kk,\kk)$,
\begin{align*} d\gamma = -\sum_{n\geqslant 2}
	(-1)^{\binom{n+1}{2}+|\gamma_1|}
	 	\gamma_1\gamma_2\cdots\gamma_n,
	 		\end{align*}
where the sum ranges through all possible decompositions of $\gamma$.\qed
	 \end{theorem}

\noindent Observe that the differential is manifestly decomposable, and
that the gradation on $\Tor_A$ provides us with a triangulation of $B$, so
that indeed $B$ is minimal. We give examples
of computation of this model in Section~\ref{sec:compute}
which we then use to compute the Tamarkin--Tsygan
calculus of $A$ explicitly.

\section{Elements of operad theory}\label{sec:2}
We now briefly recall here the
elements from the theory of colored operads,
their algebras, and Koszul duality theory which
allows us to give explicit models of algebras
`up to homotopy' of certain well behaved
operads. With this at hand, we can explain
in Section~\ref{sec:ncthings} how to
produce a homotopy coherent calculus
on nc forms and nc fields obtained from any
cofibrant replacement of an algebra.
We also prove that
the operad $\Calc$ controlling Tamarkin--Tsygan
calculi is inhomogeneous Koszul; this result
is probably known to experts, but we could
not find a proof in the literature.

\subsection{Symmetric operads}

Let us write $\SMod$ for the category
of $\Sigma$-modules, that is, collections
of graded $\kk$-modules
$X = \{X(n)\}_{n\geqslant 0}$
where for each $n\in\NN_0$ the space $X(n)$ is
a graded $S_n$-module. There is a monoidal product on
$\SMod$ that corresponds to the composition of
formal series, so that for two $\Sigma$-modules
$X$ and $Y$ with $Y(\varnothing)=0$
we have, for each finite set $I$
\[ (X\circ Y)(n) = \bigoplus_{k\geqslant 0} X(k)\otimes_{S_k} Y[k,n]{}{,} \]
where for each $n\in\NN$ and each $k\leqslant n$,
\[ Y[k,n]:=\bigoplus_{\lambda_1+\cdots+\lambda_k=n} \,{}_{S_\lambda}^{S_n}\uparrow Y(\lambda_1)\otimes\cdots\otimes Y(\lambda_k)\]
is the sum of induced representations from the subgroup
\[ S_\lambda = S_{\lambda_1}\times \cdots\times S_{\lambda_k}\subseteq S_n.\] Note it is a both an $S_n$-module
and an $S_k$-module (by permutation of the $k$ factors)
in a compatible way.

We like to think
of an element in $(X\circ Y)(n)$ as an $n$-corolla
---a rooted tree with a single vertex of degree
$n+1$--- with its root decorated with an element
of $X(n)$ and its leaves decorated with elements
of $Y$. This makes $\SMod$ into a monoidal category
with unit the symmetric sequence $(0,\kk,0,\ldots)$. Considering the particular case when
$Y=Y(1)$, we obtain an endofunctor $X{}{\colon}\Vect\longrightarrow \Vect$ such that
\[ X(V) = \bigoplus_{k\geqslant 0} X(k)\otimes_{S_k} V^{\otimes k}. \]

A symmetric operad is by definition an
 associative algebra in the monoidal category
$(\SMod, \circ,1)$. It is customary to
write $\gamma {}{\colon} P\circ P\longrightarrow P$ for
the product of $P$ which is instead called the
composition map of $P$. In this way, to every
element $(\mu;\nu_1,\ldots,\nu_t)\in P\circ P$
we assign an output $\gamma(\mu;\nu_1,\ldots,\nu_t)$ which we think of as grafting $\nu_1,\ldots,\nu_t$ at the leaves of $\mu$ to obtain a new operation
in $P$. In particular, one can consider the case
when all but one of the operations grafted at the
leaves are the unit $1\in P(1)$. This gives
us for each partial composition operations
\[ \circ_i {}{\colon} P(n) \otimes P(m) \longrightarrow P(m+n-1) \]
which are sufficient to describe $\gamma$: any
element $\gamma(a;b_1,\ldots,b_t)$  can be
obtained by iterated partial compositions. In this
language, associativity of $\gamma$ is equivalent
to the parallel and sequential composition axioms:
for operations $a,b$ and $c\in P$ with of respective
arities $l,m$ and $n$, we have that
\[\renewcommand{\arraystretch}{1.2}\begin{array}{c@{\quad}l} 
(a\circ_i b)\circ_{i+j-1} c = a\circ_i (b\circ_j c) & \text{for $i\in [l]$ and $j\in [m]$,} \\
(a\circ_i b)\circ_{k+l-1} c =(-1)^{|c||b|} (a\circ_k c)\circ_i b & \text{for $i<k\in [l]$.}
\end{array}\]
We will often make use of the partial composition
description of an operad to define the operads
of interest to us.

To every operad $P$ we can assign its
category of algebras. These are vector spaces $V$
endowed with a linear map $\gamma_V {}{\colon}P(V) \longrightarrow V$ that is compatible with the
composition law of $P$ in the sense that for
$v_1\otimes\cdots\otimes v_t\in V^{\otimes n_1}\otimes
\cdots\otimes V^{\otimes n_t}$
\[ \gamma_V(\gamma(a,b_1,\ldots,b_t),v_1,\ldots,v_t) = \gamma_V(a,\gamma_V(b_1,v_1),\ldots,
\gamma_V(b_t,v_t)), \]
and such that for each $\sigma\in S_n$ we have that \[ \gamma_V(a\cdot \sigma,v_1,\ldots,v_n) =
\gamma_V(a,v_{\sigma 1},\ldots,v_{\sigma n}).\]
We like to think of $\gamma_V(a;v_1,\ldots,v_n)$
as the result of applying operation $a$ to the
string $v_1\otimes\cdots \otimes v_n\in V^{\otimes n}$.
We also require that the unit of $P$ (if there is any)
acts as the identity endomorphism of the space $V$.

Alternatively, one can consider the endomorphism
operad $\End_V$ of a vector space~$V$ so that
for each $n\in\NN$ we have
\[ \End_V(n) = \End(V^{\otimes n},V) \]
and the operadic composition is given by
\[\gamma(f;g_1,\ldots,g_n) = f\circ (g_1\otimes \cdots g_t)\] while the symmetric group acts by permuting the variables of $V^{\otimes n}$. With this at hand,
a $P$-algebra structure on $V$ is given by a map
of operads $P\longrightarrow \End_V$.

A useful observation, that
allows us to define operads through generators
and relation, is
that operads can be defined as algebras over
a monad
\[ \mathcal T{}{\colon} \SMod\longrightarrow \SMod,\] which
we call the monad of trees. Concretely, for
each $\Sigma$-module $X$, we define a new
$\Sigma$-module $\mathcal T(X)$ so that for each $n\in\NN$
we have that $\mathcal T(X)(n)$ is spanned by isomorphism classes
of trees with $n$ leaves with each vertex $v$ decorated by en element in $X(k)$, where $k$ is the number of inputs of $v$. The multiplication map
\[ \mu{}{\colon} \mathcal T\circ \mathcal T
\longrightarrow \mathcal T \]
is obtained by erasing the boundaries of vertices in a `tree of trees'.

In this way, an operad is just an associative
algebra over $\mathcal T$ and, in particular, $\mathcal T(X)$ is
the free operad on $X$: $\mathcal T$ is the left adjoint
to the forgetful functor from the category
of operads to the category of $\SMod$. In this
way, it makes sense to say that an operad $P$ is generated by $X$: this means there is a surjective
map of operads
\[\mathcal T(X) \longrightarrow P\,.\] One then
defines the notion of a double sided ideal, as usual, and hence it makes sense to say that an
operad is generated by $X$ subject to some
$\Sigma$-module of relations~$R$, as in the
case of associative algebras.

Since it is useful for our purposes,
let us consider the example where $X= X(2)$
consists of the trivial representation of the
symmetric group $S_2$, with a generator $m$,
so that $\mathcal T(X)(2)$ consists of a single binary
tree which we identify with a commutative operation
$x_1x_2$. The relation $m\circ_1 m = m \circ_2 m$
which we can write more familiarly
as the following equation: \[ x_1(x_2x_3) = (x_1x_2)x_3.\]
It recovers the usual associativity of commutative algebras. We call $T(X)/(R)$ the
operad of commutative algebras and write it $\Com$.
Note that for each $n\in\NN$ the space $\Com(n)$
is the one dimensional trivial representation of
$S_n$, and it is spanned by an operation which
we can unambiguously write $x_1\ldots x_n$.
Similarly, we can consider the case $X(2)$ is the
regular representation of $S_2$, in which case
following the same prescription we would obtain
the operad $\Ass$ governing associative algebras.
In this case, for each $n\in\NN$ the space
$\Ass(n)$ is $n!$-dimensional and it is spanned
by the operation $x_1\ldots x_n$ along with
all possible permutations of its variables.

We
can similarly define the operad $\Lie$ governing
Lie algebras: it is generated by a single binary
operation $b$ spanning the sign representation of
$S_2$, subject to the Jacobi relation
$ (1+\tau+\tau^2)(b\circ_1 b)=0$
where $\tau = (123)\in S_3$. We can again write
this more familiarly as
\[ [[x_1,x_2],x_3]+ [[x_3,x_1],x_2] + [[x_2,x_3],x_1] = 0. \]
For each $n\in \NN$, the space $\Lie(n)$ has a
basis consisting of $(n-1)!$ Lie words
\[ [x_{\sigma 1},x_{\sigma 2},\ldots,x_{\sigma n}]{}{,}\] where $\sigma\in S_n$ fixes $1$, and where we are using the convention that we bracket a Lie word to the left. Thus, for example,
$[x_1,x_2,x_3] = [[x_1,x_2],x_3]$.

We can use these two operads to define
the operad $\Gers$ of Gerstenhaber
algebras. It generated by an associative
commutative product $m=x_1x_2$ in degree $0$
and an Jacobi bracket in degree $1$
$b=[x_1,x_2]$ subject to the Leibniz rule
$
 b\circ_1 m - m\circ_2 b - \sigma (m\circ_2 b)= 0,
$
where $\sigma = (12)\in S_3$, which we can again write in the familiar form
\[ [x_1x_2,x_3] = x_1[x_2,x_3] +x_2[x_1,x_3].\]

We remark that both in the Jacobi relation and
in the Leibniz rule are written without signs
since they are relations in the operad $\Lie$
and $\Gers$ respectively and, since $x_1$ and $x_2$
are merely placeholders, they carry no degree. When these are evaluated
at elements of a fixed algebra $V$, the Koszul
sign rule will create the appropriate signs.
For example, in our last equation, $x_1$ and $x_2$
are permuted in the last summand, so that when
evaluating this on some $v_1\otimes v_2\otimes v_3$
in $V^{\otimes 3}$, we will create a sign in the step where we permute $v_1$ and $v_2$ and then
apply $m\circ_2 b$:
\[ v_1\otimes v_2\otimes v_3 \longmapsto
 (-1)^{|v_1||v_2|} v_2\otimes v_1\otimes v_3.\]

Given an algebra $V$ over an operad
$P$, let us consider the subspace $P(V,M) \subseteq P(V\oplus M)$ of elements of the form
\[ (\mu;v_1,\ldots,v_{i-1},m,v_{i+1},\ldots,v_t),\quad\text{ where $\mu\in P$, $v_j\in V$ and $m\in M$}. \]
It is useful to think of this as a corolla with
its root decorated by an element of $P$, and
its leaves decorated by elements of $V$ except
one, which is decorated by an element of $M$,
which results on in an operation
\[ \mu {}{\colon} V^{\otimes (i-1)}\otimes M\otimes V^{\otimes (t-i)}
\longrightarrow M. \]
Note that $(P\circ P)(V,M)$ is naturally isomorphic
to $P(P(V),P(V,M))$. A left $V$-module is a vector space $M$
along with a structure map
\[ \gamma_M {}{\colon} P(V,M) \longrightarrow M \]
compatible with the algebra structure of $V$,
in the sense that, using the previous identification
 $\gamma_M\circ P(\gamma_V,\gamma_M)= \gamma_M\circ\gamma$.

 To get acquainted with this notion of
 operadic module, it is useful to note that
 for $V$ an associative algebra, a $V$-module
 is the same as a $V$-bimodule in the usual sense,
 the left and right actions given by the operations
 \[ mx = \gamma_M(\mu;m,x), \quad
 xm = \gamma_M(\mu;x,m). \]
 In case $V$ is a commutative algebra, a $V$-module is the
 same as a symmetric $V$-bimodule since in this
 case the symmetry of $\mu$ gives $xm=mx$, and for a Lie
 algebra~$V$, we recover the usual
 definition of a
 representation of a Lie algebra.

\section{Colored operads and Koszul duality} 

Operads allow us to 
describe algebras of certain kind. We can
embellish them to describe pairs of algebras
of a certain kind along with a module over
this algebra, as follows. 

Let $P$ be an operad, which we
think of as having color $\circ$ (white). We define
a $2$-colored operad $P^*$ with colors $\circ$
and $\bullet$ (black) as follows, where we use the
notation $P^*(\varepsilon;a,b)$ for the 
spaces of operations with $a$ inputs of color
$\circ$, $b$ inputs of color $\bullet$, and the single
output of color $\varepsilon\in \{\bullet,\circ\}$.
 
\begin{enumerate}
\item $P^*(\circ;a,b)= 
	\begin{cases}
		0 & \text{if $a\geqslant 1$,} \\
		P(b) & \text{if $a=0$.}
		\end{cases}$
\item $P^*(\bullet;a,b) = \begin{cases}
		0 & \text{if $a=0$ or $a\geqslant 2$,} \\
		(\partial P)(b+1) & \text{if $a=1$.}
		\end{cases}$
\end{enumerate}

Here $\partial$ is the pointing operator on
symmetric sequences: $\partial X(n)$ is a copy of $X(n+1)$ where $n+1$ is `marked' and $S_n$ acts on the remaining ones. Alternatively, we are taking the restriction of $X(n+1)$ for
the inclusion $S_n\longrightarrow S_{n+1}$ as the stabilizer of $n+1\in [n+1]$. We then
color the marked point black, as well as the output. In terms of finite sets,
we have that $(\partial X)(I) = 
X(I\sqcup \{I\})$ with the obvious
action of the symmetric group $S_I$. 

{{}} The composition law in $P^*$ is
dictated by the simple rule we may only graft a 
tree with root colored $\varepsilon$ into a 
leaf of the same color. Hence, we only have
compositions coming from those of $P$, and two
new compositions that either graft an operation
of $P$ into a white leaf of a 
marked operation of $\partial P$, or a new
composition that grafts a black root into a 
black leaf of $\partial P$, both resulting in
an operation in $P^*(\bullet;1,a)$.
 
{{}} If $V=(V_\circ,V_\bullet)$ is a pair of vector
spaces, which we think of put in color $(\circ,\bullet)$, then we can consider the colored
operad $\End_{V}$,
where for each $s,t\in\NN$ and $\varepsilon\in \{\circ,\bullet\}$,
\[ \End_{V}(\varepsilon;s,t) = \hom(V_\circ^{\otimes s}\otimes V_\bullet^{\otimes t}, V_\varepsilon).\] 
In this way, a $P^*$-algebra structure on $V$ is 
equivalent to a morphism of colored operads
\[ P^* \longrightarrow  \End_{V}. \]
With this at hand, we have the following elementary
result. We point the reader to~\cite{Anton} for
details. 

\begin{proposition}
A $P^*$-algebra structure on $V=(V_\circ,V_\bullet)$
is equivalent to the datum of
a $P$-algebra structure on $V_\circ$ and a $V_\circ$-module structure on 
$V_\bullet$.  \qed
\end{proposition}

Let us record the following result, which we will use later.
\begin{proposition}
The operad $P^*$ is Koszul if $P$ is Koszul.
\end{proposition}
\begin{proof}
This follows from the fact
the homology of the free $P^*$-algebra on
$(V_\circ,V_\bullet)$
is nothing
but the Koszul homology of the free $P$-algebra over
$V_\circ$ in the
first color and the Koszul homology of the free $P$-algebra over $V_\circ$ with coefficients in
the free $V_\circ$-module on $V_\bullet$, and
both these complexes are acyclic since $P$
is Koszul.  Since an operad is Koszul if and only if
its free algebras have trivial
Koszul (co)homology in non-negative degrees,
this proves the theorem.
\end{proof}

Observe that this implies both operads $\Com^*$ and
$\mathcal S^{-1}\Lie^*$ are Koszul. 
From the operad $\Gers$ we form the $2$-colored operad 
$\PCalc=(\Gers,\mathsf{M})$ by adding two operations $[v,x]$ 
and $v\cdot x$ in 
$\PCalc(\bullet;1,1)$, of degrees $0$ and $-1$ respectively, and relations as follows: 

\begin{tenumerate}
\item the operations $x_1x_2$ and $v\cdot x_1$ 
satisfy the same relations defining $\Com^*$,
\item the operations $[x_1,x_2]$ and $[v,x_1]$
satisfy the same relations defining $\mathcal S^{-1}\Lie^*$,
\item the Lie action and the commutative action satisfy the following 
mixed Leibniz relations:
\begin{equation}\tag{L1}
  v\cdot [x_1,x_2] = [v,x_1]\cdot x_2 - [v\cdot x_2,x_1]
  \end{equation}
\begin{equation}\tag{L2}
	[v,x_1x_2] =  [v\cdot x_1,x_2] + [v,x_1]\cdot x_2.
	  \end{equation}
\end{tenumerate}

\begin{theorem}
The operad $\PCalc$ is Koszul.
\end{theorem}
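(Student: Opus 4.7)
The plan is to deduce Koszulness of $\PCalc=\Gers^+$ from the classical Koszulness of $\Gers$ (due to Getzler--Jones and Cohen) by exploiting the structural simplicity of the pointing construction $P \mapsto P^+$.

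First I would make $\PCalc$ explicit as a quadratic $2$-coloured operad. Writing $\circ$ for the algebra colour and $\bullet$ for the module colour, its generators are the associative-commutative product $m$ and the degree $1$ Lie bracket $b$ of $\Gers$, both of arity $(\circ,\circ;\circ)$, together with their pointed partners $m'$ and $b'$ of arity $(\circ,\bullet;\bullet)$ encoding the module action. The relations are those of $\Gers$ together with the relations obtained by applying the derivation $\partial$, which assert exactly that $(m',b')$ make $V_\bullet$ into a module over the Gerstenhaber algebra $V_\circ$. In particular, this presentation is quadratic.

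The key structural observation is that $P^+$ is \emph{affine in the module colour}: since $P^+(\bullet;a,b)=0$ unless $b=1$, every tree monomial in $\PCalc$ has at most one black leaf, and when it has one there is a distinguished path from that leaf to the root. Consequently, as symmetric collections one has a splitting $\PCalc \cong \Gers \sqcup \partial\Gers$, and the candidate Koszul dual cooperad splits analogously as $\PCalc^{\antishriek} \cong \Gers^{\antishriek} \sqcup \partial\Gers^{\antishriek}$. This splitting is preserved by composition and cocomposition, since the colour rules of $P^+$ prevent mixing of the two summands under the structure maps.

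Invoking the bar--cobar criterion, Koszulness of $\PCalc$ amounts to showing that the canonical map $\Omega(\PCalc^{\antishriek}) \longrightarrow \PCalc$ is a quasi-isomorphism. By the splitting above, this map decomposes as a direct sum of two maps: the classical quasi-isomorphism $\Omega(\Gers^{\antishriek}) \to \Gers$ witnessing the Koszulness of $\Gers$, and its pointed avatar $\Omega(\partial\Gers^{\antishriek}) \to \partial\Gers$, which is obtained by applying the exact pointing functor $\partial$ to the first one and is therefore also a quasi-isomorphism. The main technical obstacle is verifying that the cobar differential on $\PCalc^{\antishriek}$ strictly preserves the splitting---that is, that it cannot create or destroy the marked black leaf---which comes down to the observation that each pointed relation involves exactly one pointed generator and therefore does not produce unpointed monomials. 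Once this compatibility is in place, Koszulness of $\PCalc$ follows immediately.
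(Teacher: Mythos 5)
Your argument is correct, but it takes a genuinely different route from the paper's. The paper invokes the criterion that an operad is Koszul if and only if the operadic (co)homology of its free algebras is trivial, and then observes that the free $P^+$-algebra on $(V_\circ,V_\bullet)$ splits into the free $P$-algebra on $V_\circ$ together with the free $V_\circ$-module on $V_\bullet$, so both homologies vanish because $\Gers$ is Koszul; the argument lives at the level of algebras. You instead work at the level of symmetric collections, splitting $\PCalc\cong\Gers\sqcup\partial\Gers$ by colour and reducing the cobar-resolution criterion $\Omega(\PCalc^{\antishriek})\to\PCalc$ to the known white quasi-isomorphism plus its image under the exact functor $\partial$. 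Both proofs exploit the same structural fact (every black-output operation has exactly one black input), but yours is more self-contained, trading the paper's reliance on the free-algebra characterisation of Koszulness for a direct computation with the Koszul dual cooperad. One caveat: the step you single out as the main technical obstacle --- that the cobar differential cannot create or destroy the black leaf --- is automatic from the colouring. The real content is the identification, compatibly with differentials, of the black-output component of $\Omega(\PCalc^{\antishriek})$ with $\partial\bigl(\Omega(\Gers^{\antishriek})\bigr)$: a tree whose spine vertices carry $\partial\Gers^{\antishriek}$-decorations and whose remaining vertices carry $\Gers^{\antishriek}$-decorations is the same as a $\Gers^{\antishriek}$-decorated tree with one marked leaf, and one must check that the vertex-splitting differentials match under this bijection (note that $\partial\Gers^{\antishriek}$ is not itself a cooperad, so $\Omega(\partial\Gers^{\antishriek})$ should be read as shorthand for that component rather than as a cobar construction in its own right). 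This verification is routine, but it is where the actual work of your proof lies and should be spelled out.
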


\begin{proof}
We use the distributive law criterion,
adapted to $2$-colored operads. 
Namely, the result that if $P$
and $P'$ are Koszul operads
and if $P''$ is a third operad obtained from a distributive
rule $P'\circ P \longrightarrow P\circ P'$, 
then $P''$ is itself Koszul~\cite{LV}*{Section 8.6}. We will apply this criterion
in the case $P=\Com^*$ and $P'=\mathcal S^{-1}\Lie^*$.

To do this orient the Leibniz rule (L1) to display it in the form
\[ [v\cdot x_2,x_1] =  [v,x_1]\cdot x_2 -v\cdot [x_1,x_2]
 \]
and rewrite the second Leibniz rule (L2) incorporating (L1) as follows:
\[[v,x_1x_2] =   [v,x_2]\cdot x_1 -v\cdot [x_1,x_2] + [v,x_1]\cdot x_2.  \]
In this way, along with the usual Leibniz rule, we have defined a map
\[ V\circ_{(1)} V' \longrightarrow V'\circ_{(1)} V \]
on the generators $V$ of $\Com^*$ and $V'$ of $\mathcal S^{-1}\Lie^*$
which, in turn, induces a surjective map
\[ \Com^* \circ \mathcal S^{-1}\Lie^* \longrightarrow \PCalc.\]
It is well known that, on the sub-operad where all outputs and
inputs are of the same (white) color, this is indeed an
isomorphism, so it suffices we check that this on the 
components of mixed color.

It is straightforward to check that $(\Com^* \circ \mathcal S^{-1}\Lie^*)(n)$
is of dimension $2\cdot n!$, so it suffices we check
that $\PCalc(n)$ is of the same dimension. To do this, we
recall that in~\cite{TTD} the authors provide us with a geometrical 
model of $\PCalc$.
Indeed, their results show there exists a topological operad $\mathsf{PreCyl}$ along with homotopy equivalences
\[\mathsf{PreCyl}(\circ;n,0)\simeq \operatorname{Conf}_n(\mathbb R^2)
 \quad \mathsf{PreCyl}(\bullet;n-1,1)\simeq \operatorname{Conf}_{n-1}(\mathbb R^2\smallsetminus 0), \]
and a surjection $\PCalc\longrightarrow H_*(\mathsf{PreCyl})$,
 and the Poincar\'e series of these spaces are known~\cite{Arnold}.
 This gives us the requisite lower bound on the dimensions of $\PCalc$,
 and shows that $\PCalc$ is obtained from a distributive law
from two Koszul operads. This shows it is Koszul,
as we claimed.
\end{proof}

\begin{remark}
Observe that this dimension count along with the methods
developed in~\cite{KhaKho} and the `quantum order'
of~\cite{Dotsenko2020}, allows us to exhibit a quadratic Gr\"obner
basis of $\PCalc$ where the leading terms of the mixed Leibniz rules
(L1) and (L2) are $[v\cdot x_1,x_2]$ and $[v,x_1x_2]$, as it was
done for the Poisson operad in~\cite{Dotsenko2020a}, for
the pre-Poisson operad in~\cite{Dotsenko2020}, and the for
Lie--Rinehart operad in~\cite{KhaKho}.
\end{remark}

Let us now
add a square zero operator $\delta$ of degree $1$ to $\PCalc$ along with
the non-quadratic relation encoding the magic
formula of Cartan
\[\textcolor{black}{  \delta(v\cdot x)-  \delta v \cdot x= [v,x]. }\]
We write this operad $\Calc$; it is the operad
 governing Tamarkin--Tsygan calculi. If we consider
 the increasing filtration on $\Calc$
  by the number of uses
 of $\delta$, we obtain an associated quadratic
 operad  $q\Calc$ governing $\PCalc$-algebras
 with a degree $-1$
 square zero operation $\delta$ such that
 \[\textcolor{black}{ [\delta v,x] = \delta[v,x],\qquad \delta(v\cdot x) = \delta v\cdot x.} \]
 Let us consider the (colored)
 operad of dual numbers
 $\D = T(\delta)/(\delta^2)$
 put in color~$\bullet$. Then we can present $q\Calc$ through
 a distributive rule $\lambda$ between $\PCalc$ and
 $\D$, and we claim that this is a distributive law. To do this, we need to check that
  \[ \rho \colon \D\circ \PCalc  \longrightarrow \PCalc\vee_\lambda \D =q\Calc  \]
  is an isomorphism.

  \begin{theorem}\label{thm:inhomog} The operad $q\Calc$
  is Koszul and $\Calc$ is inhomogeneous Koszul. Moreover, the map
  $q\Calc \longrightarrow \mathrm{gr}\,\Calc $
    is an isomorphism of symmetric sequences,
  so that   the underlying collections
  of $\Calc$ and $q\Calc$ are isomorphic to
  $\D\circ \PCalc$.
  \end{theorem}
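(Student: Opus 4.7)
The plan is to decouple the two claims: first establish that the quadratic operad $q\Calc$ is Koszul through a distributive law between $\D$ and $\PCalc$, and then bootstrap to the inhomogeneous statement using the criterion for inhomogeneous Koszul operads from~\cite{BV}, namely the Poincar\'e--Birkhoff--Witt style conditions (ql1) and (ql2).

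For the first step, the rewriting rule $\lambda\colon \D\circ\PCalc\to\PCalc\circ\D$ is dictated by the quadratic relations $\delta\circ_1 i+i\circ_2\delta=0$ and $\delta\circ_1 L+L\circ_2\delta=0$ of $q\Calc$: it pushes an instance of $\delta$ past the only two generators of $\PCalc$ with black output, namely $i$ and $L$, each time picking up a minus sign. Since no other generator of $\PCalc$ has black output, defining $\lambda$ on generators determines it everywhere. To show that the canonical map $\rho\colon\D\circ\PCalc\to q\Calc$ is an isomorphism of symmetric sequences it suffices, by the operadic diamond criterion, to verify that $\lambda$ defines a distributive law, which amounts to a finite check in low weight. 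The key cases are the compatibility with $\delta^2=0$, where iterated application of the rewrite turns $\delta^2\circ_1 i$ into $i\circ_2\delta^2=0$ and similarly for $L$; and the compatibility with the Jacobi, Leibniz and module axioms of $\PCalc$, each of which reduces to routine sign bookkeeping because $\delta$ is unary and passes through each quadratic relation up to the prescribed sign. Once $\lambda$ is confirmed to be a distributive law, the general theorem on distributive laws between Koszul operads in~\cite{BV} yields that $q\Calc$ is Koszul with underlying collection $\D\circ\PCalc$, using that $\PCalc$ is Koszul by the previous theorem and that $\D$ is the classical Koszul operad of dual numbers in the black color.

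For the inhomogeneous step, apply the criterion from~\cite{BV}: if the quadratic datum $(E,qR)$ is Koszul, and the inhomogeneous relations $R$ satisfy (ql1), that $R\cap E=\{0\}$, and (ql2), that the linearization map $\phi\colon qR\to E$ is compatible with the relations of the quadratic operad, then $T(E)/(R)$ is inhomogeneous Koszul and the projection $q\Calc\to\mathrm{gr}\,\Calc$ is an isomorphism. The sole inhomogeneous relation for $\Calc$ is Cartan's formula $\delta\circ_1 i+i\circ_2\delta=L$, whose linear part $L$ is a genuine generator rather than a preexisting combination of generators, so (ql1) is immediate. For (ql2), one checks that inserting Cartan's formula inside any weight-three consequence of the Jacobi, Leibniz, module and $\delta$-anticommutation relations produces only relations already belonging to $qR$; this is a finite computation over the weight-three compositions involving $\delta$, $i$, $L$, the Gerstenhaber product and the Gerstenhaber bracket.

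The main obstacle is the systematic verification of (ql2): although one expects morally that Cartan's formula is compatible with all the remaining calculus relations, the formal proof requires a case-by-case analysis of every weight-three element of the free operad where the rewriting $\delta\circ_1 i+i\circ_2\delta\leadsto L$ can be applied in more than one way, together with a demonstration of confluence. Once this is handled, combining the distributive law presentation of $q\Calc$ with the isomorphism $q\Calc\cong\mathrm{gr}\,\Calc$ delivers all four assertions of the theorem simultaneously.
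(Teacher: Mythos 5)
Your proposal follows the same route as the paper: a distributive law between $\D$ and $\PCalc$, verified by the low-weight injectivity/confluence criterion, gives Koszulity of $q\Calc$ with underlying collection $\D\circ\PCalc$, and the inhomogeneous (PBW) Koszul machinery then yields the isomorphism $q\Calc\to\mathrm{gr}\,\Calc$. You are in fact somewhat more explicit than the paper about the conditions (ql1) and (ql2), which the paper leaves implicit in the phrase ``this implies that $\Calc$ is inhomogeneous Koszul''; the only slip is notational --- for the normal form to be $\D\circ\PCalc$ the rewriting must move $\delta$ outward, i.e.\ $\lambda\colon\PCalc\circ\D\to\D\circ\PCalc$, the reverse of the arrow you wrote.
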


\begin{proof}
Since $\D$ and $\PCalc$ are both Koszul, we deduce
that $q\Calc$ is also Koszul by a another use of the
distributive law argument of~\cite{LV}*{Section 8.6}.
To apply it, we need to check that $\rho$ is an isomorphism,
which
is immediate, since it is surjective and by~\cite{TTD}
the dimensions of the domain and the codomain match. This implies that $\Calc$ is
inhomogeneous Koszul, and hence the
Poincar\'e--Birkhoff--Witt theorem for
Koszul operads shows that
 $q\Calc \longrightarrow \mathrm{gr}\,\Calc$
is an isomorphism.
  \end{proof}

From this, we obtain a model
$\Calc_\infty$ of $\Calc$ with generators
\[ \Calc^\antishriek = \PCalc^\antishriek\circ \D^\antishriek\]
through the methods of~\cite{LV}*{Chapter 7}. Note that $\D^\antishriek = T[u]$ is a polynomial algebra generated by $u=\delta^\vee$, and when we write $\Calc^\antishriek$
we are doing it in the sense of inhomogeneous
Koszul duality theory.


\begin{remark}
In~\cite{TT} the authors state
that $\Calc$ is Koszul and give a description of what they refer to as $\Calc_\infty$-algebras.
However, it is not clear that $\Calc$ admits a
quadratic presentation, although it does admit
a quadratic-cubic presentation (where
the quadratic-cubic equations only
appear in the operations of mixed color). 
\end{remark}

\section{Main results}\label{sec:3}

Throughout, we fix an associative algebra $A$
concentrated in homological degree $0$ and
a model $B$ of $A$ (see Section~\ref{sec:1}),
and show how to compute the Tamarkin--Tsygan
calculus of $A$ using only the dga algebra $B$
and two small complexes $\X^*(B)$
and $\T_*(B)$ built out of $B$, which we like
to think of as the complexes of non-commutative
(nc) polyvector fields and non-commutative
differential forms
on $B$. We begin by recalling the essential details
of this algebraic structure we want to compute.

\subsection{Tamarkin--Tsygan calculi}\label{sec:TTC}

We have already introduced the operad
$\Calc$. It turns out the pair
\[ (\HH^*(A),\HH_*(A)) \]
admits a structure of a $\Calc$-algebra
 consisting of the
cup product, the cap product, Gerstenhaber's bracket  and Connes' operator. The Lie action on $\HH_*(A)$
is defined so that for each
$\omega\in \HH_*(A)$ and each $X\in\HH^*(A)$,
\[ L_X(\omega) = di_X(\omega)-\textcolor{blue}{(-1)^{|X|}}i_X(d\omega),\]
where we are using the notation $i_X$ for the cup
product action of $X$ on $\HH_*(A)$; this will be
convenient later.

That these operations
define a $\Calc$-algebra structure
was originally proved in~\cite{Daletskii}. See also~\cite{TT2}. We call this
algebra--module
pair $(\HH^*(A),\HH_*(A))$ along
with the operator $d$ the \emph{Tamarkin--Tsygan calculus}
of $A$.
We will write
 $\Calc_A$ for this $\Calc$-algebra,
 hoping it does not give rise to any confusion, in that
  we are not evaluating $A$ on
 the endofunctor $\Calc$, for example.

\begin{proposition}\label{proposition:TTC}
Let $A$ be an associative algebra. Then
$(\HH^*(A),\smile,
[-,-])$ is a Gerstenhaber algebra,
and $(\HH_*(A),i)$ is a module over
$(\HH^*(A),\smile)$ and $(\HH_*(A),L)$ is a module over $(\HH^*(A),[-,-])$.
Moreover, if for each $X \in \HH^*(A)$ we write
$i_X$ for the action of
$X$, for $Y\in \HH^*(A)$,
\[ [i_X,L_Y] = i_{[X,Y]} \quad\text{ and }\quad L_{X\cdot Y} =L_Xi_Y+{(-1)^{|X|}i_XL_Y}.\]

\end{proposition}

We
can describe all the operators of Proposition~\ref{proposition:TTC}
on the cochain level using the pair of classical complexes
$(C^*(A),C_*(A))$ as follows. For cochains
$f,g \in C^*(A)$
homogeneous of degrees $p$ and $q$ and a chain \[ z=a[a_1\vert\cdots\vert
a_{p+q}]\in C_*(A)\] of degree $n=p+q$, the cup product, the cap product, the Lie bracket and Connes' differential are defined by the following formulas,
where $\circ$ is Gerstenhaber's pre-Lie bracket
\[ f\circ g = \sum_{i=1}^n f\circ_i g\] on the
{endomorphism} operad $C^*(A)=
\End_A$ (see the Section~\ref{sec:2} for details):
\begin{titemize}
\item Cup product: $
	f \smile g=
		\mu \, (f\tt g) \, \Delta,$
\item Cap product: $i_f(z) =
	a f[a_1\vert\cdots\vert a_p]
		[a_{p+1}\vert \cdots\vert a_{p+q}],$
		\item Lie bracket: $
			[f,g] = f\circ g - (-1)^{(p-1)(q-1)} g\circ f$,
			\item Connes' differential: $ dz = \sum_{j=0}^n
	 (-1)^{jn}
		[a_{j+1}\vert\cdots \vert a_n \vert a \vert
		a_1 \vert\cdots\vert a_j] ${}{,}
		\item Lie action: $L_f(\omega) = [d,i_f](\omega)${}{.}
						 \end{titemize}
In other words, the homology of
 the pair of complexes
   \[ (C^*(A),C_*(A))\]
along with the operators $(\smile,[-,-],i,L,d)$
recovers the Tamarkin--Tsygan calculus
of the associative algebra $A$.

The work of Keller~\cite{DIH,KellInv1}, and later of
Keller and Armenta~\cite{KellArm,Calc} shows that the
Tamarkin--Tsygan calculus of an algebra is derived invariant.
Although our main result does imply it is homotopy invariant,
in the sense it induces a well-defined functor (as in Theorem 8.5.3 in~\cite{Hinich})
\[ \Ho(\Alg)^{\rm{iso}} \longrightarrow
 	\Ho(\Calc)^{\rm{iso}}, \]
invariance is not a novel result, since homotopy equivalent
algebras are derived equivalent. Rather, our main result is
more concerned with the explicit computation of this calculus
using a choice of resolution in $\Alg$.

To be precise, the result of Armenta--Keller implies that
one may \emph{attempt} to compute the calculus of an
associative algebra $A$ by choosing some model $B$ of it
and, in some way or another, obtain a description
of the spaces
$(\HH^*(A),\HH_*(A))$
and formulas for the $\Calc$-algebra structure here.
Knowing this, our goal here is, having chosen~$B$,
to carry out the last step as explicitly as possible,
and to give a datum depending on~$B$ that makes such
computation feasible.

\subsection{Fields and forms}\label{sec:ncthings}

Let us consider the $B$-bimodule resolution
of $B$ given by
\[ S_*(B) {}{\colon} 0\longrightarrow	
	B\otimes V\otimes B\longrightarrow
	 B\otimes B\longrightarrow 0,\]
where the only non-trivial map is given
by \[ b\otimes v\otimes b' \longmapsto
bv\otimes b' - b\otimes vb'.\]
In case $B$ has no differential, this is the usual small
resolution of the free algebra $TV$. In our case, there
is a caveat: this is a semi-free resolution, in the sense
that
\[ B\otimes V\otimes B\]
 can be filtered by
sub-bimodules (using the triangulation of $V$) in such
a way that the successive quotients are free $B$-bimodules
on a basis of cycles. Indeed, once we have considered the filtration on
$B\otimes V\otimes B$ using a triangulation of $V$, the differential
that is nontrivial on $V$ will vanish. Since
this is precisely the
kind of resolutions
needed to compute
Hochschild (co)homology
in the dg setting, we may
use this resolution
to proceed with our computations.

To understand
where the differential of $B\otimes V\otimes B$
of comes from, it is
useful to note it
identifies with the
bimodule of
\emph{Kahler differentials $\Omega_B^1$} of $B$.
This is the free $B$-bimodule on $B$, say spanned
by basis elements $
b\otimes b'\otimes b''$
subject to the relations
\[ b' \otimes  b_1b_2\otimes b''= b' b_1 \tt b_2\tt b''+b' \tt b_1 \tt b_2b''.\]
Note that since $B$ is
generated by $V$, any class in $\Omega_B^1$,
which we write $bdb'b''$,
can be written in the form $b_1 dv b_2	$, and the
identification is such that
\[ g{}{\colon} b\otimes v\otimes b' \longmapsto b dv b'. \]
The following lemma makes
this precise.
\begin{lemma}\label{lema:smallV}
The map
$g{}{\colon} B\tt V\tt B
 \longrightarrow
  \Omega_B^1$
  is an isomorphism of $B$-bimodules.
\end{lemma}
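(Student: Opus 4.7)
The strategy is to recognize both sides of $g$ as representing objects for the same functor on the category of $B$-bimodules, and deduce that $g$ is a specific case of the resulting canonical isomorphism; an explicit inverse can then be written down if desired.

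First, I would recall the two universal properties in play. On one hand, $\Omega_B^1$ is by construction the bimodule corepresenting derivations: for any $B$-bimodule $M$ there is a natural isomorphism $\hom_{B\text{-bimod}}(\Omega_B^1,M) \cong \Der(B,M)$, given by precomposition with the universal derivation $d:B\longrightarrow \Omega_B^1$, $b\longmapsto db$. On the other hand, $B\tt V\tt B$ is the free $B$-bimodule on the graded space $V$; accordingly, restriction along the inclusion $V\hookrightarrow B\tt V\tt B$, $v\longmapsto 1\tt v\tt 1$, produces a natural isomorphism $\hom_{B\text{-bimod}}(B\tt V\tt B,M) \cong \hom_\kk(V,M)$.

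Second, I would use that $B=TV$ is the free graded associative algebra generated by $V$: any $\kk$-linear map $V\longrightarrow M$ extends uniquely to a graded derivation $TV\longrightarrow M$ by iterated application of the Leibniz rule (with the appropriate Koszul signs), so $\Der(TV,M) \cong \hom_\kk(V,M)$. Combining this with the two isomorphisms above shows that both $B\tt V\tt B$ and $\Omega_B^1$ corepresent the functor $M\longmapsto \hom_\kk(V,M)$ on $B$-bimodules. By Yoneda, there is a unique isomorphism between them intertwining the two identifications, and since the universal derivation $d$ sends $v$ to $g(1\tt v\tt 1)$ by definition of $g$, this canonical isomorphism is exactly $g$.

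For concreteness, I would also point out the explicit inverse: expand a form $b'\,d(v_1\cdots v_n)\,b''\in\Omega_B^1$ by the Leibniz rule and map it to
\[
h(b'\tt v_1\cdots v_n \tt b'') = \sum_{i=1}^n (-1)^{\varepsilon_i}\, b' v_1\cdots v_{i-1}\tt v_i\tt v_{i+1}\cdots v_n b''
\]
with the sign $\varepsilon_i$ dictated by the Koszul rule. A brief direct check shows $h$ is well defined on $\Omega_B^1$ (the Leibniz relation imposed on the generators is exactly what is used to define $h$) and that $gh=\id$ and $hg=\id$. The only subtlety, and the mildest obstacle, is keeping track of the Koszul signs in the graded setting; none of these interact with the differential on $B$, because $g$ is a map of graded bimodules and both sides inherit their differentials from the universal properties in a natural way, so that compatibility of $g$ with differentials is automatic.
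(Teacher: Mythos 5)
Your argument is essentially identical to the paper's proof: both identify $\Omega_B^1$ and $B\tt V\tt B$ as corepresenting objects for the functor $M\longmapsto \Der(B,M)\cong\hom(V,M)$ (the latter isomorphism using that $B=TV$ is free) and conclude by Yoneda that $g$ is the resulting canonical isomorphism. The explicit Leibniz-expansion inverse you supply is a correct addition that the paper omits.
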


\begin{proof}
The $B$-bimodule $\Omega_B^1$ satisfies the
universal property that
the restriction map induced
from $B\longrightarrow \Omega_B^1$ sending $b\longmapsto db$ induces
a bijection
\[ \hom_{B^e}(\Omega_B^1,-)
 \longrightarrow
 	\Der(B,-). \]
Since $B$ is free on $V$,
this last functor is
naturally isomorphic to
the functor $\hom(V,-)$.
 It is immediate that
 the inclusion of $V$
 into $B\otimes V\otimes B$
 satisfies the analogous universal property, which
 gives the result.
\end{proof}

Since
$S_*(B)$ gives us
a resolution of $B$
in $B$-bimodules
the complexes
\[ \X^*(B) =
	\hom_{B^e}(S_*(B),B) ,
\quad
	\T_*(B) =
	B \tt_{B^e} S_*(B) \]
 compute Hochschild
(co)homology of $B$ and
hence, by invariance,
that of $A$. We call
$\X^*(B)$ the \new{space of nc poly vector fields on $B$},
and $\T_*(B)$ the \new{space of nc differential forms on
$B$}. Let us now observe
that from Lemma~\ref{lema:smallV} it follows
these complexes take
a very simple form,
which also explains the
origin of their names.

\begin{corollary}
We have natural identifications
 of complexes
\[ \X^*(B) =
 \cone(\mathrm{Ad}{}{\colon} B\longrightarrow \Der(B)),
 \quad
\T_*(B) = \cone(\mathrm{Co}{}{\colon} B\otimes V \longrightarrow B).
\]

\end{corollary}
Here, the map $\mathrm{Ad}$
is the adjoint map of $B$
and the map $\mathrm{Co}$ is
the commutator map of $B$.

{{}}\label{fourterm}
It is worthwhile to observe that
since $H_*(B) = H_0(B)= A$, there is
a four term exact sequence coming for the
long exact sequence of the cone
\[ 0\longrightarrow \HH_1(A) \longrightarrow H_0(B\otimes V)
	\longrightarrow H_0(B) \longrightarrow \HH_0(A)
	\longrightarrow 0 \]
and it is straightforward to check that the image of the middle arrow
is
\[[A,A]\subseteq A = H_0(B),\] which recovers the usual
description of $\HH_0(A)$. Moreover, we see that $\HH_1(A)$
is the kernel of this map, and that for $n\in\NN_{\geqslant 2}$, we an identification
$\HH_n(A) = H_{n-1}(B\otimes V)$.
Dually, for $n\in\NN_{\geqslant 2}$ we have an
identification
\[\HH^n(A) =  H^{n+1}(\Der(B)),\] and a four term exact sequence
\[0\longrightarrow\HH^0(A) \longrightarrow H^0(B)
	\stackrel{j*}{\longrightarrow} 	H^0(\Der(B))
		\longrightarrow \HH^1(A) \longrightarrow 0,\]
that shows that $\HH^1(A) = H^0(\Der(B))/ \mathrm{im}(j^*)$.
A far-reaching generalization of this, explaining the relation
of operadic cohomology and Hochschild cohomology of operads
under reasonable homotopical hypotheses is present in~\cite{Rezk}*{Theorem 1.3.8}.

We now show that the resolution
$S_*(B)$ is retraction of the double
sided bar resolution of $B$, a fact
known since the inception of Hochschild
cohomology. Once we have developed the
homotopy theory of calculi, this will
give us a way of transporting the homotopy
coherence calculus of chains on $B$ onto
the pair $(\X^*(B),\T_*(B))$. For convenience,
we recall that the data of a homotopy
deformation retract
is a triple of maps of complexes  $(i,\pi,h)$
\[ i{}{\colon}C'\longrightarrow C,
	\quad
 \pi {}{\colon} C\longrightarrow C' \]
such that $\pi i =1$ and $i\pi-1 = dh+hd$.

\begin{lemma}\label{lema:retract} There is a homotopy deformation retract $(i,\pi,h)$ between $S_*(B)$ and
the double sided bar resolution $\mathsf{Bar}_*(B)$
of $B$, where we may take
\[
\pi {}{\colon} \mathsf{Bar}_*(B) \longrightarrow S_*(B)\]
 to be either zero or the identity
except on $B\otimes B\otimes B$ where
for $b = v_1\cdots v_n$ we have
\[ \pi(1\otimes b\otimes 1) =
	\sum_{i=1}^n v_1\cdots v_{i-1}\otimes
		v_i\otimes v_{i+1}\cdots v_n.\]
In particular, the chain complex $C_*(B)$ is homotopy equivalent to $\T_*(B)$ and the cochain complex $C^*(B)$ is
homotopy equivalent to~$\X^*(B)$.
\end{lemma}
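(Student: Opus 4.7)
The plan is to exhibit the homotopy deformation retract $(i,\pi,h)$ directly at the level of $B$-bimodule resolutions, and then transport it along the functors $B\otimes_{B^e}(-)$ and $\hom_{B^e}(-,B)$ to obtain the asserted homotopy equivalences at the level of (co)chain complexes.

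First, I would define $i: S_*(B)\hookrightarrow \mathsf{Bar}_*(B)$ as the identity on $B\otimes B$ and as $b_0\otimes v\otimes b'\mapsto b_0\otimes [v]\otimes b'$ on $B\otimes V\otimes B$, and extend the formula stated in the lemma for $\pi$ by $B^e$-bilinearity in bar degree one, setting $\pi$ to be zero in bar degrees $\geqslant 2$. The identity $\pi i=\mathrm{id}_{S_*(B)}$ is then immediate. That $i$ is a chain map reduces to the very definition of the differential on $S_*(B)$. That $\pi$ is a chain map requires two checks: in bar degree one, $d_S\pi(1\otimes [v_1\cdots v_n]\otimes 1)$ telescopes to $v_1\cdots v_n\otimes 1 - 1\otimes v_1\cdots v_n$, which matches $\pi d_{\mathsf{Bar}}(1\otimes[b]\otimes 1)$; in bar degree two, $\pi$ applied to each of the three simplicial faces cancels against the next bar position; in higher bar degrees both sides vanish.

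The heart of the argument lies in constructing the bimodule homotopy $h:\mathsf{Bar}_*(B)\to \mathsf{Bar}_{*+1}(B)$ satisfying $i\pi-\mathrm{id}=d_{\mathsf{Bar}}h+hd_{\mathsf{Bar}}$. The guiding picture is that $(i\pi-\mathrm{id})(1\otimes[b]\otimes 1)$ records the difference between the expanded symbol (essentially $\sum v_1\cdots v_{i-1}\otimes[v_i]\otimes v_{i+1}\cdots v_n$, pushed back into $\mathsf{Bar}_*(B)$) and the compressed symbol $1\otimes[b]\otimes 1$, so $h$ should implement a sequence of partial expansions. Concretely, I would set $h=0$ in bar degree zero and, on the $B^e$-generators of bar degree $k\geqslant 1$, define
\[ h(1\otimes [b_1|\cdots|b_k]\otimes 1) = \sum_{r=1}^k\sum_{j=1}^{k_r-1} \pm\, 1\otimes[b_1|\cdots|b_{r-1}|v_{r,1}\cdots v_{r,j}|v_{r,j+1}\cdots v_{r,k_r}|b_{r+1}|\cdots|b_k]\otimes 1, \]
where $b_r=v_{r,1}\cdots v_{r,k_r}$ and the signs are those forced by the Koszul convention on $s\overline{B}$. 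A direct verification of $dh+hd$ then shows that the newly created split terms collapse in consecutive pairs, leaving exactly $i\pi-\mathrm{id}$; this is the same telescoping used for $\pi$, now iterated at every bar position.

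Finally, note that $i$, $\pi$ and $h$ are all $B^e$-linear by construction. Applying the additive functors $B\otimes_{B^e}(-)$ and $\hom_{B^e}(-,B)$ to the triple $(i,\pi,h)$ produces homotopy deformation retracts between $C_*(B)$ and $\T_*(B)$ and between $C^*(B)$ and $\X^*(B)$, since both functors preserve chain homotopies. The main obstacle is the construction of $h$ and, more delicately, the bookkeeping of signs in the dg setting, where the internal differential of $B$ contributes perturbation terms to every identity above. A safer but less explicit alternative is to argue abstractly: both $S_*(B)$ and $\mathsf{Bar}_*(B)$ are semi-free resolutions of the same $B$-bimodule $B$, hence equivalent by the usual lifting property in the projective model structure on $B^e$-modules, and one then perturbs an abstract equivalence so as to extend the prescribed pair $(i,\pi)$.
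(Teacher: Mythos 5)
Your treatment of $\pi$ is sound and agrees with the paper's: the only nontrivial component is the quotient of $B\otimes B\otimes B$ onto the K\"ahler differentials $\Omega_B^1$ under the identification of Lemma \ref{lema:smallV}, and your telescoping checks in bar degrees one and two are exactly why it is a chain map. The genuine gap is the explicit homotopy $h$, which is the heart of your argument and does not work as written. Test it already in bar degree one on $1\otimes[v_1\cdots v_n]\otimes 1$ with $n\geqslant 3$: since $h$ vanishes in bar degree zero, only $dh$ contributes, and applying the bar differential to your splittings $1\otimes[v_1\cdots v_j\,\vert\, v_{j+1}\cdots v_n]\otimes 1$ produces, from the inner face, $n-1$ copies of $1\otimes[v_1\cdots v_n]\otimes 1$ (a multiplicity that no choice of Koszul signs can reduce to the single copy required by $-\mathrm{id}$ when $n-1$ is even), and, from the outer faces, terms $v_1\cdots v_j\otimes[v_{j+1}\cdots v_n]\otimes 1$ and $1\otimes[v_1\cdots v_j]\otimes v_{j+1}\cdots v_n$ whose bar slots have length $>1$ for $1<j<n-1$; these can neither cancel among themselves nor match the length-one terms $v_1\cdots v_{i-1}\otimes[v_i]\otimes v_{i+1}\cdots v_n$ of $i\pi$. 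So the claimed ``collapse in consecutive pairs'' fails, and a correct explicit homotopy must strip off one generator at a time while pushing the prefix out into the outer module factors; it is genuinely more delicate, which is precisely why the paper declines to write one down and remarks afterwards that explicit formulas for $i$ and $h$ would be desirable.

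Your closing ``safer alternative'' is, in substance, the paper's actual proof, and you should promote it from an afterthought to the main argument. The paper argues: $\pi$ is a chain map by the K\"ahler-differential identification; when $B$ has zero differential one may take $i$ to be the inclusion of $B\otimes V\otimes B$ into $B\otimes B\otimes B$, and the retract exists because both complexes are (semi-)free resolutions of $B$ in $B$-bimodules, so the surjective quasi-isomorphism $\pi$ onto a cofibrant object splits up to $B^e$-linear homotopy; in the dg case one filters the total complexes by columns, observes that the internal differential of $B$ lowers the filtration, and invokes the homological perturbation lemma to perturb the retract. If you follow this route, the one point worth making explicit is that the perturbed $(i,\pi,h)$ remain $B^e$-linear, so that applying $B\otimes_{B^e}(-)$ and $\hom_{B^e}(-,B)$ --- which is the correct final step, exactly as you say --- yields the stated homotopy equivalences of $C_*(B)$ with $\T_*(B)$ and of $C^*(B)$ with $\X^*(B)$.
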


\begin{proof}
Let us begin by noting that
the non-trivial component
of $\pi$ identifies with
the quotient
map $q {}{\colon} B\otimes B\otimes
B \longrightarrow
\Omega_B^1$
under the isomorphism of
Lemma~\ref{lema:smallV}, so it is
immediate it is a map of complexes.
To construct $i$, we observe that
if $B$ has zero differential, we
may choose it to be the inclusion
of $B\otimes V\otimes B$ into
$B\otimes B\otimes B$.

In case
$B$ has a differential, we can
filter the total complexes above
using its columns, and then the
extra differential coming from
$B$ lowers this filtration degree.
In this way, the homological perturbation
lemma guarantees that we can obtain
again a homotopy retract.
\end{proof}

It would be desirable to have
an explicit formula of the maps $i$ and $h$.

\subsection{The homotopy calculus}\label{subsec:MMC}

Having already
set up things to obtain a  retract from Hochschild
(co)chains of $B$ onto the pair
$(\X^*(B),\T_*(B))$, we recall that
{Hochschild}{} (co)chains on $B$ carry
a homotopy coherent calculus structure
(which we can understand at the `strict'
level) and use this to obtain
a homotopy coherent calculus structure on
$(\X^*(B),\T_*(B))$ whose homotopy type
recovers $\Calc_A$.

Let us recall from the Section~\ref{sec:2} that
there is a $2$-colored operad $\PCalc$ so that
a $\PCalc$ algebra is the same as a pair $(V,M)$
where $V$ is a Gerstenhaber algebra and
$M$ is a $V$-module. As explained there,
extending the operad
$\PCalc$ by a square zero
unary operation $d$ of the second {color}{}
and imposing the Cartan formula gives us
the operad $\Calc$.

We proved that the
operad $\PCalc$ is quadratic Koszul, while
the operad $\Calc$ is inhomogeneous Koszul.
In particular, $\Calc$ admits a model where the
only non-quadratic differential comes from
the Cartan formula and, in particular, there is
$\Gers_\infty$ placed in the first {color}{}. We will write $\Calc_\infty$
for this model, although it is not the same model
that the authors consider in~\cite{TTD}. With
this at hand, we recall from~\cite{TTD}*{Section 4.3}
the following technical results:

\begin{theorem}
There is a dg colored operad $\mathsf{KS}$,
the Kontsevich--Soibelman operad,
and a topological colored operad $\mathsf{Cyl}$
such that
\begin{tenumerate}
\item the operad $C_*(\mathsf{Cyl})$ is formal
and its homology is $\Calc$,
\item there is a quasi-isomorphism $\mathsf{KS}
\longrightarrow C_*(\mathsf{Cyl})$ of
dg-operads,
\item $\mathsf{KS}$ acts on the pair
$(C^*(-),C_*(-))$ in such a way that
\item on homology we obtain the
usual $\Calc$-algebra structure on
$(\HH^*(-),\HH_*(-))$.
\end{tenumerate}
In particular, for every cofibrant replacement $Q$ of $\Calc$, the pair $(C^*(A),C_*(A))$ is a
$Q$-algebra which on homology gives $\Calc_A$.
\end{theorem}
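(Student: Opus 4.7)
The plan is to assemble statements (1)--(4) from the existing literature, chiefly~\cite{TTD} and~\cite{KS}, and then to deduce the ``In particular'' clause through a standard cofibrancy argument in the semi-model category of $2$-coloured dg operads. My contribution would be purely one of packaging these results together and then transporting the action along the appropriate quasi-isomorphisms; I would not attempt to reprove any of (1)--(4) from scratch.

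I would begin by recalling the construction of the two operads. The topological $2$-coloured operad $\mathsf{Cyl}$ parametrises configurations of marked points on a cylinder: the white colour records discs in the plane, so that its white part recovers the little discs operad, and the black colour records insertions on the cylinder itself, carrying the $S^1$-action that will give rise to Connes' rotation on homology. The dg operad $\mathsf{KS}$ is built combinatorially from braces, cups and caps, and by construction acts on the pair $(C^*(-),C_*(-))$ through the formulae of Proposition~\ref{prop:TTC}; this establishes (3) and, after passing to homology, (4).

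For (1) I would invoke the coloured analogue of Kontsevich's formality theorem, proved in~\cite{TTD} via transcendental integrals over compactified configuration spaces; this yields an $\infty$-quasi-isomorphism from $C_*(\mathsf{Cyl})$ to its homology operad, which by a direct computation of the spaces $\mathsf{Cyl}(n,m)$ is identified with $\Calc$. This is the deepest input and would be the main obstacle in a self-contained account; here I simply quote it. Item (2) is the coloured Deligne conjecture: the comparison between the combinatorial chain model $\mathsf{KS}$ and the geometric model $C_*(\mathsf{Cyl})$, also available in~\cite{TTD}.

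Finally, for the ``In particular'' clause, the zig-zag $\mathsf{KS}\longrightarrow C_*(\mathsf{Cyl})\longleftarrow\Calc$ exhibits both intermediate terms as operads weakly equivalent to $\Calc$ in the semi-model category of $2$-coloured dg operads. Given any cofibrant replacement $Q\longrightarrow\Calc$, cofibrancy of $Q$ lifts this zig-zag to a quasi-isomorphism $Q\longrightarrow\mathsf{KS}$ over $\Calc$, unique up to homotopy. Pulling back the $\mathsf{KS}$-action along this map endows $(C^*(A),C_*(A))$ with a $Q$-algebra structure, and since the underlying chain and cochain complexes are unchanged, taking homology recovers $(\HH^*(A),\HH_*(A))$ with precisely the $\Calc$-algebra structure $\Calc_A$ of Proposition~\ref{prop:TTC}.
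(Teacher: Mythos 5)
Your proposal is correct and follows essentially the same route as the paper: items (1)--(4) are quoted from \cite{TTD} and \cite{KS}, and the final clause is obtained by using cofibrancy of $Q$ to lift to a map $Q\longrightarrow\mathsf{KS}$ and then restricting the $\mathsf{KS}$-action on $(C^*(A),C_*(A))$ along it. The only (cosmetic) difference is that you trace the full zig-zag through $C_*(\mathsf{Cyl})$, whereas the paper lifts directly against a surjective quasi-isomorphism $\mathsf{KS}\longrightarrow\Calc$ viewed as a trivial fibration.
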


\begin{proof}
Claims (1){}{--}(4) are in the cited references, and
the last claim is standard, but let us explain
it, since we will apply it for our choice of
cofibrant replacement of $\Calc$. That $Q$ is a
cofibrant replacement of $\Calc$ means that there
is a quasi-isomorphism $Q\longrightarrow \Calc$
and $Q$ is a cofibrant operad in the model category
of dg operads. Since we have a surjective
quasi-isomorphism $\mathsf{KS}\longrightarrow \Calc$ or, what is the same,
a trivial fibration. But $Q$ is cofibrant,
so we can obtain a lift $Q\longrightarrow \mathsf{KS}$. Finally, since
 $\mathsf{KS}$ acts on the pair $(C^*(A),C_*(A))$, so does $Q$, and on homology we obtain $\Calc_A$.
\end{proof}

In particular, from the
theorem it follows that $(C^*(B),C_*(B))$ admits a $\Calc_\infty$-algebra
structure which on homology gives
the $\Calc$-algebra $\Calc_B=\Calc_A$, and
we will write $\Calc_{\infty,B}$ for
this structure. We point out that the explicit construction of the operads $\mathsf{KS}$ and
$\mathsf{Cyl}$ are not necessary for us here.

We remind the reader that $\Calc_\infty$
is our choice of cofibrant replacement obtained
through the inhomogeneous Koszul duality theory.
Our main result is that the pair
$(\X^*(B),\T_*(B))$ admits
$\Calc_\infty$-algebra structure which on homology
gives the usual $\Calc$-algebra structure; for
some formulas see Theorem~\ref{thm:formulas}.

\begin{theorem}
For any quasi-free model $B$ of $A$,
the pair $(\X^*(B),\T_*(B))$ admits
a $\Calc_\infty$-algebra
structure which is $\infty$-quasi-isomorphic to the
$\Calc_\infty$-algebra $\Calc_{\infty,A}$.
In particular, this structure
recovers $\Calc_A$ by taking homology.
\end{theorem}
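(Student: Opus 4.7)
The plan is to obtain the desired $\Calc_\infty$-structure on $(\X^*(B),\T_*(B))$ as the \emph{transferred structure} along the deformation retract provided by Lemma~\ref{lema:retract}, starting from the $\Calc_\infty$-algebra $\Calc_{\infty,B}$ of the previous theorem on $(C^*(B),C_*(B))$. The key input is the Homotopy Transfer Theorem (HTT) for algebras over an inhomogeneous Koszul operad, as developed in~\cite{BV}: since we have already shown that $\Calc$ is inhomogeneous Koszul with $\Calc^\antishriek=\PCalc^\antishriek\circ\D^\antishriek$, that machinery is available and, from any deformation retract between two coloured chain complexes, produces a transferred $\Calc_\infty$-structure together with an explicit $\infty$-quasi-isomorphism.

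First I would promote the $B$-bimodule deformation retract $(i,\pi,h)$ between $S_*(B)$ and $\Bar_*(B)$ of Lemma~\ref{lema:retract} to a deformation retract of the two-coloured pair $(\X^*(B),\T_*(B))$ and $(C^*(B),C_*(B))$. Applying $B\otimes_{B^e}(-)$ yields a deformation retract between $\T_*(B)$ and $C_*(B)$ in the second colour, while applying $\hom_{B^e}(-,B)$ yields a deformation retract between $\X^*(B)$ and $C^*(B)$ in the first colour with the arrows reversed, and the homological perturbation argument used in the proof of Lemma~\ref{lema:retract} produces compatible homotopies in either case. Feeding this pair-retract into the HTT for $\Calc_\infty$ yields a $\Calc_\infty$-algebra structure on $(\X^*(B),\T_*(B))$ together with an $\infty$-quasi-isomorphism to $\Calc_{\infty,B}$, which by the previous theorem is $\Calc_{\infty,A}$. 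Since an $\infty$-quasi-isomorphism is in particular a quasi-isomorphism of underlying coloured complexes, and the induced $\Calc$-structure on homology agrees with $\Calc_A$ through this quasi-isomorphism, the \emph{in particular} clause follows.

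The main obstacle is the careful handling of the inhomogeneous part of the operadic differential coming from Cartan's formula, since the transferred structure receives contributions from both the quadratic and the linear part of the differential on $\Calc^\antishriek$; this is precisely where one needs the extension of HTT past the purely quadratic setting, as worked out in~\cite{BV}. A secondary but necessary technical point is to match the low-arity operations of the transferred structure with the concrete formulas of Theorem~\ref{thm:formulas}; this amounts to tracing decorated trees in $\Calc^\antishriek$ through the explicit retraction data of Lemma~\ref{lema:retract}, and observing that the only non-vanishing contributions are those in which the projection $\pi$ is applied as described there, producing in particular the stated formula for $d\omega$ and identifying the cup product with the brace $\{X,Y;d\}$.
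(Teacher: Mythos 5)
Your proposal is correct and follows essentially the same route as the paper: transfer the $\Calc_\infty$-structure from $(C^*(B),C_*(B))$ along the deformation retract of Lemma~\ref{lema:retract} via the homotopy transfer theorem for inhomogeneous Koszul operads of~\cite{BV}, obtaining the structure together with an $\infty$-quasi-isomorphism. The only point to tighten is the phrase ``$\Calc_{\infty,B}$, which by the previous theorem is $\Calc_{\infty,A}$'': these live on the different complexes $(C^*(B),C_*(B))$ and $(C^*(A),C_*(A))$, so, as in the paper, one needs a second application of the transfer theorem along the quasi-isomorphism induced by $B\to A$ to relate them.
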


\begin{proof}
By results of Section~\ref{sec:1}, the operad $\Calc$
is inhomogeneous Koszul. The methods developed
in~\cite{BV} imply that $\Calc$-algebras then
have the same rich homotopy theory that algebras
over Koszul operads do. In particular by
\cite{BV}*{Theorem 33}, any $\Calc_\infty$-algebra
structure may be transported through a homotopy equivalence
of chain complexes. By Lemma~\ref{lema:retract}, there
is a homotopy equivalence
\[ (C^*(B),C_*(B))
	\longrightarrow (\X^*(B),\T_*(B)),\]
which implies the $\Calc_\infty$-algebra structure
on $(C^*(B),C_*(B))$ may be transported to one
on the pair $(\X^*(B),\T_*(B))$.  The
fact this structure is $\infty$-quasi-isomorphic
to that one on 
$\Calc_{\infty,A}$ follows by another use
of the transfer theorem, this time using
$\Calc_{\infty,A}$ and $\Calc_{\infty,B}$.
\end{proof}

The space $\X^*(B)$ of nc poly vector
fields on $B$ is a dg Lie
algebra under the usual bracket between derivations, with
differential $[\partial_B,-]$. We can now state the next theorem, which
in particular tells us that to compute Gerstenhaber brackets in $A$ we may
do so by choosing any model $B$ of $A$ and computing the usual
Lie bracket in $\X^*(B)$. This last statement is a special
case of~\cite{Hinich}*{Theorem 8.5.3}.

\begin{theorem}\label{thm:formulas}
Let $B=(TV,d)$ be a quasi-free model of $A$. Then
\begin{tenumerate}
\item the cup product can be computed through
the symmetrization of dual cobrace operation
$x_1x_2 = \{x_1,x_2;d\}$ of Theorem~\ref{thm:cobraces},

\item the Gerstenhaber bracket is the Lie bracket
on the semi-direct product of a shifted copy of $B$ with its Lie algebra of
derivations,
\item the Connes boundary can be computed by
 \[ d\omega = \sum_{i=1}^n (-1)^\varepsilon
	v_{i+1}\cdots v_nv_1\cdots v_{i-1}
	dv_i \]
for $\omega = b+b'dv$ be a differential
form in $\T_*(B)$  and $b = v_1\cdots v_n$.
\end{tenumerate}	
\end{theorem}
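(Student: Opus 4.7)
The plan is to derive each of the three formulas by transporting the classical $\Calc$-structure on $(C^*(B),C_*(B))$ through the homotopy deformation retract $(i,\pi,h)$ of Lemma~\ref{lema:retract}. By the preceding theorem, the transferred $\Calc_\infty$-structure on $(\X^*(B),\T_*(B))$ is $\infty$-quasi-isomorphic to $\Calc_{\infty,A}$, so it suffices to describe the quadratic generators of $\Calc_\infty$ explicitly on the small complexes. The transferred action of each classical generator $\mu$ reduces to the direct composite $\pi\circ\mu\circ i^{\otimes k}$; the higher corrections in the homotopy transfer formula involving the homotopy $h$ only contribute to the higher-arity $\infty$-operations of $\Calc_\infty$, not to the degree-zero action we wish to pin down.

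For (2), I identify $\X^*(B)=\cone(\mathrm{Ad}:B\longrightarrow\Der(B))$ as a subcomplex of $C^*(B)$ via Lemma~\ref{lema:smallV}: a derivation $f\in\Der(B)$ corresponds to its restriction $f|_V\in\hom(V,B)$, re-extended to a Hochschild cochain by the Leibniz rule, while the $C^0$-summand~$B$ embeds as inner derivations through $\mathrm{Ad}$. Under this inclusion the classical Gerstenhaber circle product of two such cochains reduces to its only non-vanishing term, the composition of derivations on a single input, so that skew-symmetrization gives the usual commutator on~$\Der(B)$, and the semi-direct product with the shifted bimodule $B$ is recovered from the embedding $\mathrm{Ad}$.

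For (3), I take $\omega=b=v_1\cdots v_n\in\T_0(B)=B$, lift it via $i$ to $b\in C_0(B)$, apply the classical Connes operator $dz=\sum_{j=0}^{n}(-1)^{jn}[a_{j+1}\vv\cdots\vv a_n\vv a\vv a_1\vv\cdots\vv a_j]$ on the bar side, and project via $\pi$. Each cyclic rotation places a single generator $v_i$ in the bar slot, and since $v_i$ already lies in $V$, the projection~$\pi$ identifies the resulting chain with $v_{i+1}\cdots v_n v_1\cdots v_{i-1}\,dv_i\in\T_1(B)$; collecting the sum over $i$, with the sign $\varepsilon$ coming from the cyclic permutation combined with the Koszul signs carried by the generators, produces the stated formula.

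For (1), the classical cup product on $C^*(B)$ does not preserve the subcomplex $\X^*(B)$, so its transport uses the non-trivial component of~$\pi$. Given $X,Y\in\X^*(B)$, I compute $X\smile Y$ as a Hochschild cochain and project by the dual of $\pi$, whose non-trivial component on bar elements of length one is parametrized by factorizations of $b=v_1\cdots v_n$ in $TV$. Through these factorizations the operation $X\smile Y$ is re-expressed as a brace-type pairing that evaluates the two arguments against the differential $d$ of $B$, which matches the dual cobrace $\{X,Y;d\}$ of Theorem~\ref{thm:cobraces}; symmetrizing recovers the cup product because $\HH^*(A)$ is graded commutative. The main obstacle is the consistent sign bookkeeping in (1) and (3), and in particular verifying that the transferred cup product coincides exactly with $\{X,Y;d\}$, since the sum over factorizations defining $\pi$ interacts non-trivially with the differential of~$B$.
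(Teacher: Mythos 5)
Your overall strategy---transporting the classical calculus on $(C^*(B),C_*(B))$ through the retract of Lemma~\ref{lema:retract} and reading off the leading, quadratic terms of the transferred structure---is a legitimate alternative to what the paper does, and for parts (2) and (3) it lands in essentially the same place. The paper also handles (3) by a direct check (it invokes the boundary map of the long exact sequence relating Hochschild and cyclic homology), and your computation---lift a degree-zero chain, apply Connes' operator, project by the explicit $\pi$---is a perfectly adequate substitute, since in degree zero no knowledge of $i$ or $h$ is needed. Part (2) likewise reduces to the observation that the circle product of derivation-type cochains restricts to composition of derivations, whose antisymmetrization is the commutator; this is Stasheff's description and matches the paper's appeal to the antisymmetrized brace $\{x_1;x_2\}$.

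The genuine gap is in part (1), and you have in effect named it yourself. The leading term of the transferred product is $i^*(\pi^*X\smile\pi^*Y)$, so identifying it with $\{X,Y;d\}$ requires explicit control of the inclusion $i$ of the retract. When $B$ carries a differential, $i$ is only produced by the homological perturbation lemma, and the paper explicitly remarks that no closed formula for $i$ (or $h$) is available; consequently the assertion that the sum over factorizations defining $\pi$ "matches the dual cobrace" is a claim, not a proof. Even granting an explicit $i$, the two cochain-level products need not coincide on the nose---agreement up to homotopy would suffice for the statement, but that too needs an argument. The paper sidesteps this entirely: it constructs the dual-brace structure on $\X^*(B)$ independently (Theorem~\ref{thm:cobraces}), shows that $\X^*(B)$ is $\Br$-quasi-isomorphic to the brace algebra $C^*(A)$, and then uses Tamarkin's quasi-isomorphism $\Gers_\infty\longrightarrow\Br$ to conclude that the symmetrization of $\{x_1,x_2;d\}$ computes the cup product on cohomology. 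To repair your route for (1) you must either produce $i$ explicitly (at least its component relevant to binary operations) or replace the termwise comparison by an argument at the level of $\Br$- or $\Gers_\infty$-quasi-isomorphisms, which is exactly the paper's choice.
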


\begin{proof}
In Theorem~\ref{thm:cobraces} we will see that
the cup product above is part of a brace algebra
structure on $\X(B)$ which has the same
quasi-isomorphism type as the brace algebra $C^*(B)$,
which implies that the symmetrization of the operation in that formula gives the desired cup product in cohomology. Similarly, the Lie bracket
corresponds to the antisymmetrization of the brace
operation $\{x_1;x_2\}$ which is just the composition. Finally, the claim for the Connes
boundary can actually be checked directly, using,
for example, the definition of the boundary map
of the usual LES of {Hochschild}{} and cyclic cohomology.
\end{proof}

\section{Other results and computations}\label{sec:4}

\subsection{Brace operations}

Let $V$ be a graded vector space. A
$B_\infty$-algebra structure on $V$ is the datum
of a structure of dg bialgebra on $T(sV)$ where
the comultiplication is given by deconcatenation~\cite{GetJon}.
It follows that the data required to define such
structure amounts to a differential on $T(sV)$,
which gives $V$ the structure of an $A_\infty$-algebra,
along with a multiplication on $T(sV)$. The fact
this is a map of coalgebras means it is completely
determined by a map
$T(sV)\otimes T(sV)\longrightarrow sV$. This defines, for each
$(p,q)\in\NN\times \NN$, a map of degree $1-p-q$
\[ \mu_{p,q} {}{\colon}V^{\tt p}\otimes
V^{\tt q} \longrightarrow V.\]

Let $B$ be a quasi-free dga algebra. We
proceed to show that its space of nc-vector fields $\X^*(B)$
admits a $B_\infty$-algebra structure.
We have already noted it admits an
$A_\infty$-structure, so it suffices we define the
family of maps corresponding to the multiplication. As it happens for Hochschild cochains,
we can arrange it so that for each $(p,q)\in\NN\times \NN$ we have $\mu_{p,q} = 0$ whenever
$q>1$, in which case what we have
is an algebra over the operad of
braces $\Br$.

For linear maps $f_1,\ldots,f_n,g\in \hom(V,B)$
corresponding to a derivation in $\X^*(B)$, we define the operation
$\{f_1,\ldots,f_n;g\}$ as follows. Let $\mathrm{sh}(f_1,\ldots,f_n)$
be the unique derivation on $B$ that acts by zero on
monomials of length less than $n$, and acts, for
$k\in\NN$ on monomials of length $n+k$ by the sum
\[\sum_\sigma \sigma(f_1\otimes\cdots\otimes f_n\otimes 1^{\tt k})\] as $\sigma$ runs through $(n,k)$-shuffles
in $S_{n+k}$. In other words, each $\sigma$ keeps $f_1,\ldots,f_n$
in order and puts the identities in the middle slots. If $G$ is the derivation that corresponds
to $g$, we set $\{f_1,\ldots,f_n;g\}$ to be the linear
map corresponding to the derivation $\mathrm{sh}(f_1,\ldots,f_n)\circ G$. We call $\{f_1,\ldots,f_n;g\}$ a \emph{dual brace
operation}.

Recall from~\cite{BraceOp} that if $A$ is an associative
algebra, there are \emph{brace operations} defined
on $C^*(A)$ that make it, along with its usual
structure of a dga algebra, into a
brace algebra.
Concretely, for each $n\in\NN$ and for
$f,g_1,\ldots,g_n\in C^*(A)$ we have that
\[ \{f;g_1,\ldots,g_n\} = f\circ  \mathrm{sh}(g_1,\cdots,g_n).\]
 In other words, we are inserting
$g_1,\ldots,g_n$ into $f$ in all possible ways
preserving their order. For example, $\{f;g\}$ is
the circle product of Gerstenhaber whose antisymmetrization
gives the Gerstenhaber bracket. Note our notation
is not ambiguous here: $\{f;g\}$ is both a brace
and a dual brace operation, in both cases it
is induced by composition of (co)derivations.

\begin{theorem}\label{thm:cobraces}
Let $B = (TV,d)$ be a quasi-free dga algebra.
The dual braces give the space of nc fields
$\X^*(B)$ the structure of a $\Br$-algebra. In particular, for every quasi-free algebra $B$, the
space of nc fields on $B$ is a $\Gers_\infty$-algebra. Moreover, $\X^*(B)$ is $\Br$-quasi-isomorphic to the brace algebra $C^*(A)$.
\end{theorem}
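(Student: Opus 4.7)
The plan is to establish the theorem in two independent steps: first verify that the dual brace operations make $\X^*(B)$ into a strict $\Br$-algebra, and then identify its $\Br$-quasi-isomorphism type with that of $C^*(A)$.

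For the first step I would follow closely the classical verification of the brace axioms for Hochschild cochains in~\cite{BraceOp}. Because $B=TV$ is free, a derivation of $B$ is uniquely determined by its restriction to $V$, so composing the shuffle-derivations $\mathrm{sh}(f_1,\ldots,f_n)$ with another derivation $G$ and then restricting to $V$ produces exactly the kind of insertion-of-cochains sum that governs the Hochschild braces; the higher brace relations then reduce to the same shuffle identities used there. It remains to check that the total differential of $\X^*(B)$ is a derivation of these operations: the part $[\partial_B,-]$ on the $\Der(B)$ summand by a direct Leibniz computation, and the interaction with the $\mathrm{Ad}:B\to\Der(B)$ summand of the cone by the fact that inner derivations $[b,-]$ behave predictably under composition with other derivations.

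For the second step I would use Lemma~\ref{lema:retract}, which provides a homotopy deformation retract between $S_*(B)$ and $\Bar_*(B)$ as $B$-bimodule resolutions of $B$. Applying $\hom_{B^e}(-,B)$ yields a chain-level homotopy equivalence $\pi^*:\X^*(B)\to C^*(B)$. To promote it to a $\Br$-quasi-isomorphism, the cleanest route is to invoke the homotopy transfer theorem for algebras over the (quadratic) Koszul operad $\Br$: transferring the standard brace structure on $C^*(B)$ along this retract equips $\X^*(B)$ with a $\Br_\infty$-structure that is by construction $\infty$-quasi-isomorphic to it, and one then checks that this transferred structure is $\infty$-isomorphic to the explicit strict one built in the first step. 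The key local observation, which makes this comparison tractable, is that the projection $\pi$ of Lemma~\ref{lema:retract} evaluated on a monomial $b=v_1\cdots v_n$ is precisely the derivation-extension formula, so composition of derivations of a free algebra corresponds, under $\pi^*$, exactly to the shuffle-insertion of cochains underlying the braces on $C^*(B)$. Composing with the map $C^*(B)\to C^*(A)$ induced by the model $B\to A$, which is a standard $\Br$-quasi-isomorphism, yields the desired comparison.

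The main obstacle I anticipate is the identification between the transferred and the explicit brace structures on $\X^*(B)$: the map $\pi^*$ will generally fail to strictly intertwine all higher braces, and pinning down the needed $\infty$-morphism requires careful bookkeeping of signs and of the choice of homotopy $h$. A cleaner alternative, which I would try first, is to give a direct coalgebraic interpretation of $\X^*(B)$ as coderivations of a small cofree coalgebra quasi-isomorphic to $\Bar(A)$, \`a la Stasheff~\cite{Sheff}, so that the brace operations reduce purely to composition of coderivations and the transfer machinery is avoided altogether.
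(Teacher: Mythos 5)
Your first step is exactly the paper's argument: the brace axioms for the dual braces on $\X^*(B)$ are verified by dualizing the classical computation for Hochschild cochains, using that a derivation of the free algebra $TV$ is determined by its restriction to $V$. For the quasi-isomorphism claim your route diverges from the paper's. You propose to transfer the brace structure from $C^*(B)$ to $\X^*(B)$ along the retract of Lemma~\ref{lema:retract} by invoking the homotopy transfer theorem for ``the quadratic Koszul operad $\Br$''; but $\Br$ is not a quadratic operad in the usual sense (it is generated by operations of all arities), so the standard Koszul-duality form of the transfer theorem does not apply to it directly. The paper avoids this by routing through Tamarkin's quasi-isomorphism $\Gers_\infty\longrightarrow\Br$: one pulls the brace structures on both $\X^*(B)$ and $C^*(B)$ back to $\Gers_\infty$-structures, for which transfer and comparison along the homotopy equivalence of Lemma~\ref{lema:retract} are standard since $\Gers$ is Koszul, and then translates the resulting $\Gers_\infty$-quasi-isomorphism back into a $\Br$-quasi-isomorphism, finally composing with the standard $\Br$-quasi-isomorphism $C^*(B)\to C^*(A)$. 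Your approach can be repaired by substituting any cofibrant replacement of $\Br$ (e.g.\ $\Gers_\infty$ itself, via formality of the little disks operad) for the quadratic-Koszul machinery, and you correctly identify the genuine delicate point that both arguments leave implicit, namely that the transferred structure on $\X^*(B)$ agrees, up to $\infty$-isomorphism, with the explicit dual-brace structure; your observation that $\pi$ on a monomial $v_1\cdots v_n$ is the derivation-extension formula is precisely the local input needed for that comparison. Your suggested alternative via coderivations of a small coalgebra \`a la Stasheff is reasonable but is not what the paper does.
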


\begin{proof}
The first part of the first claim is proved,
mutatis mutandis, in the same way one shows
brace operations on $C^*(A)$ satisfy the
defining equations of a brace algebra: the
definitions are dual to each other. The second claim
follows from a deep result of Tamarkin~\cite{HinichTam},
which shows that there is a quasi-isomorphism of operads
$\Gers_\infty \longrightarrow \Br$. In this way,
one can obtain from the brace algebra structure of $\X^*(B)$ to a $\Gers_\infty$-structure.

For the last claim, we note that $C^*(B)$ is
$\Br$-quasi-isomorphic to $C^*(A)$. Our theorem
implies that $\X^*(B)$ is, in particular,
$\Gers_\infty$-quasi-isomorphic to $C^*(B)$
and, by Tamarkin's result, we deduce that these
two are also $\Br$-quasi-isomorphic.
\end{proof}

\textit{A word of caution.}
The cup product
we have defined using the differential
$d$ of $B$ is not commutative on the nose,
and in fact the cup product that one obtains using
the result of Tamarkin is given by the symmetrization
of this product. Since the cup product is commutative
when passing to homology, there is no harm in using
the non-symmetrized version for computations.

\subsection{Duality}

In~\cite{Hers}, Herscovich fixes an augmented
weight graded connected dg algebra $A$ and proceeds to
show that, writing $E_A$ for the dual of the dg coalgebra
$BA$, the Tamarkin--Tsygan calculi of $A$ and $E_A$
are dual to each other.

In the same context, let us
pick a quasi-free model $(TV,d)=B\longrightarrow A$
of $A$, where $V$ is a weight graded
$A_\infty$-coalgebra. It then makes sense to
consider the Tamarkin--Tsygan calculus of the
$A_\infty$-algebra $E_A = (sV)^\#$. From our main theorem,
one obtains the, now tautological, extension of Herscovich's result.

 We also point out
that, following Berglund and B\"{o}rjeson~\cite{Alexander},
the algebra $A$ is $A_\infty$-Koszul with Koszul dual $E_A$. Their result also
implies that $\X^*(B)$ is $A_\infty$-quasi-isomorphic to the {Hochschild}{} dg-algebra
$C^*(A)$ with its canonical product and
differential.

\begin{theorem}
The Tamarkin--Tsygan calculi of $A$ and $E_A$
are dual to each other in the sense that there is
an isomorphism of Gerstenhaber algebras
and an isomorphism of Gerstenhaber modules,
\[ f{}{\colon} \HH^*(E_A)\longrightarrow \HH^*(A)\quad
 g{}{\colon}\HH_*(A)^\#\longrightarrow
 	\HH_*(E_A)\]
respectively,
 	where $\HH_*(A)$
 	is a module over $\HH^*(E_A)$ through $f$. Moreover, we have that $\delta_E g=-g\delta_A^\#$ where $\delta_E$ is the Connes boundary of
 	$\;\HH_*(E_A)$ and $\delta_A$ is that of
 	$\;\HH_*(A)$.
 	
\end{theorem}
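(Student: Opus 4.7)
The plan is to apply the main theorem to the chosen quasi-free model $B = (TV,d)$ of $A$ and recognise that the resulting small complexes, reinterpreted via the Berglund--Börjeson $A_\infty$-Koszul duality, simultaneously compute the Tamarkin--Tsygan calculus of $E_A$ up to linear duality in each weight. The two isomorphisms $f$ and $g$ will then appear by reading one pair of complexes in two different ways, and the required compatibilities with the Gerstenhaber operations will be automatic.

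First, by the main theorem the pair $(\X^*(B),\T_*(B))$ carries a $\Calc_\infty$-structure whose homology is $\Calc_A$, with the degree zero operations described by the explicit formulas of Theorem~\ref{thm:formulas}; in particular the cup product, bracket, module action and Connes boundary are determined by the single datum $d$. Since $B$ is the $A_\infty$-cobar construction on the $A_\infty$-coalgebra $sV$ and $E_A = (sV)^\#$, the same datum $d$ equivalently encodes the $A_\infty$-algebra operations on $E_A$. The Berglund--Börjeson theorem provides an $A_\infty$-quasi-isomorphism $\X^*(B) \to C^*(A)$; an entirely parallel analysis, using weight-wise finite dimensionality, identifies $\X^*(B)$ with a Koszul-type small complex computing $C^*(E_A)$, and $\T_*(B)^\#$ with the analogous small complex for $C_*(E_A)$.

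These identifications define the maps $f \colon \HH^*(E_A) \to \HH^*(A)$ and $g \colon \HH_*(A)^\# \to \HH_*(E_A)$ of the statement as the isomorphisms on homology obtained by reading $(\X^*(B),\T_*(B))$ in the two dual fashions. Compatibility of $f$ with the Gerstenhaber algebra structure and of $g$ with the Gerstenhaber module structure are then tautological, since both sides are computed from the same $\Calc_\infty$-data on $(\X^*(B),\T_*(B))$; the dual brace formula $\{x_1,x_2;d\}$ for the cup product and the bracket of derivations reproduce each side interchangeably.

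The hard part will be the Connes boundary compatibility $\delta_E g = -g \delta_A^\#$. I plan to verify it directly from the explicit formula of Theorem~\ref{thm:formulas}(3),
\[ d\omega = \sum_{i=1}^n (-1)^\varepsilon v_{i+1}\cdots v_n v_1 \cdots v_{i-1}\, dv_i, \]
by observing that, under the weight-wise pairing between $V$ and its dual occurring in $E_A$, this cyclic rotation on monomials in $B$ dualizes, modulo the suspension signs $\varepsilon$ coming from $sV$, to the cyclic rotation defining Connes' boundary on the Hochschild chains of $E_A$. Tracking the suspension signs should yield exactly the sign in $\delta_E g = -g \delta_A^\#$, and promotes Herscovich's calculation in the strict Koszul case to the $A_\infty$-Koszul setting considered here.
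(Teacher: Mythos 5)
Your proposal follows essentially the same route as the paper: both identify the nc-fields/forms complexes built from $B=(TV,d)$ with the linear duals of the classical (co)chain complexes of $E_A=(sV)^\#$ (i.e.\ $\hom(V,TV)$ versus coderivations of the bar construction, and $V\otimes TV$ versus its dual), and obtain $f$ and $g$ by reading the same data in two ways, with compatibility of the operations reduced to Herscovich's identifications. The only cosmetic difference is that you propose to recheck the Connes-boundary sign directly from the rotation formula, where the paper simply cites~\cite{Hers}.
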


\begin{proof}
We already know that to compute the Tamarkin--Tsygan
calculus of $A$ we may use nc-fields and nc-forms
obtained from $B$. To compute the Tamarkin--Tsygan
calculus of $E_A$, we may use a quasi-free dg model
of the dg coalgebra $BE_A$ ---the
$\infty$-bar construction on $E_A$--- and nc-cofields,
that is, coderivations, and nc-coforms on it: this
is just the classical definition. These two constructions
are dual to each other: the space
$\hom\left(BE_A,E_A\right)$
is equal to $\hom(V,TV)$ while
$BE_A\otimes E_A$ is dual to $V\otimes TV$,
and as explained in~\cite{Hers}, these isomorphisms are compatible with
the cup and cap products, the Lie bracket, and
Connes' boundary.
\end{proof}

\subsection{Examples of computation}\label{sec:compute}
We now give two examples where, in the spirit of~\cite{FMT}, we compute
Hochschild cohomology of an algebra $A$ using a (minimal) model of it. We will also compute, in some cases, Hochschild homology
and cyclic homology, and the action of Hochschild cohomology on
Hochschild homology using the results of the previous sections. We remark that the computations
carried out here are many and for convenience we
have omitted them. However, they are all simple
and straightforward computations with no
intricacies whatsoever, so nothing should be
lost to this omission.

\medskip

\noindent\textit{A crown quiver algebra.}
Let us consider the quiver as in Figure~\protect\ref{wheelfigure}
with the single relation $\alpha_1\alpha_2\cdots\alpha_r\alpha_1$, and its associated algebra $A$.
In~\cite{ReRo}, the authors compute its Hochschild cohomology, including
the Gerstenhaber bracket and the cup product. We will recover their
results using the minimal model of $A$.

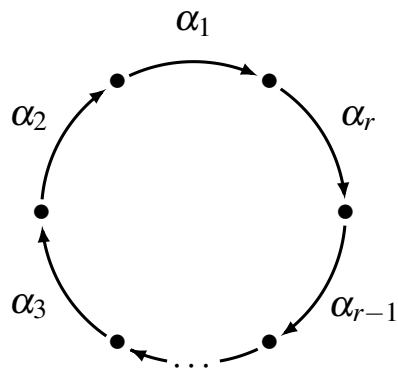
\begin{figure}[ht]
\centering
\vspace{-4 pt}
\begin{tikzpicture}[scale = 0.6]
    \foreach \a in {1,2,3,4,6}
    {
        \node (u\a) at ({\a*60}:2){$\bullet$};
        \draw [latex-,
        		line width = 1.15 pt,
        		domain=\a*60-55:\a*60-5] plot ({2*cos(\x)}, {2*sin(\x)});
    }
    \node (a1) at ({1*60+33}:2.5){$\alpha_1$};
    \node (a2) at ({2*60+33}:2.5){$\alpha_2$};
    \node (a3) at ({3*60+33}:2.5){$\alpha_3$};
    \node (a4) at ({4*60+33}:2.05){$\cdots$};
    \node (a5) at ({5*60+33}:2.8){$\alpha_{r-1}$};
    \node (a6) at ({6*60+33}:2.5){$\alpha_r$};
    \node (u4) at ({5*60}:2){$\bullet$};
    \draw [latex-,	line width = 1.15 pt,
        		domain=245:260] plot ({2*cos(\x)}, {2*sin(\x)});
    \draw [line width = 1.15 pt,
        		domain=280:295] plot ({2*cos(\x)}, {2*sin(\x)});
\end{tikzpicture}
\caption{The wheel}\label{wheelfigure}
\end{figure}

We begin by noting that for $n\in\NN$
we have that $\Tor_A^{n+1}(\kk,\kk)$ is one dimensional
spanned by the class
of the chain
\[\varepsilon_n =[\alpha_1\vv \alpha_2\cdots\alpha_r \alpha_1\vv
\cdots\vv \alpha_2\cdots\alpha_r\alpha_1]\] while, as usual, $\Tor_A^1$ is
spanned by
the arrows. One way to see this is to argue that
the Anick resolution of the trivial $A$-module $\kk$,
which has the overlappings
\[ \tau_n = \alpha_1(\alpha_2\cdots\alpha_r\alpha_1)^n \]
corresponding to $\varepsilon_n$ as $A$-module
generators, is minimal, and hence these generate the corresponding $\Tor$-groups. The minimal model $B$ has then $r$ generators in degree~$0$,
and we write $\varepsilon_0$ the one corresponding to $[\alpha_1]$,
and for each $n\in\NN$ a generator $\varepsilon_n$ in degree $n$ whose
differential is, by the main result of~\cite{Tamaroff}, as follows:
\[\label{eqn:diffcrown} \partial(\varepsilon_n) = \sum_{s+t=n-1}(-1)^s \varepsilon_s\alpha_2\cdots\alpha_r\varepsilon_t.\]
We are intentionally suppressing the sign given by the binomial
coefficient since in this case there is exactly one non-vanishing
higher coproduct, $\Delta_{r+1}$.

To find $\HH^*(A)$, observe that for each
natural number $n\in\NN_0$ there is
an obvious cycle $f_n$ of degree $-n$ in $\Der(B,A)$ such
that $f_n(\varepsilon_n) = \varepsilon_0$, and one can check, as it is done
in~\cite{ReRo}, that it provides a generator for $\HH^{n+1}(A)$, which
is therefore one dimensional. We will now find a derivation of $B$
that covers $f_n$ under $\alpha{}{\colon}B \longrightarrow A$, and then
compute the Gerstenhaber bracket with these cycles: since the arrow
$\alpha_*$ is a quasi-isomorphism, we deduce that these cycles represent
generators for the cohomology groups of $\Der(B)$.

To do this, let us fix $n\in\NN_0$ and let $F$ be a derivation
of degree $-n$ such that $F(\varepsilon_n) = \varepsilon_0$. Recursively
solving for the values of $F$ on generators using the equation \[\partial F =
(-1)^n F\partial,\] shows that the following choice of
lift works:
\[ F_{2n}(\varepsilon_t) =  (t-2n+1) \varepsilon_{t-2n} ,\quad
 F_{2n+1}(\varepsilon_t) = \begin{cases}
 	\varepsilon_{t-2n-1}, & \text{for $t$ odd}{}{,} \\
 	0, & \text{for $t$ even}.
 		\end{cases} \]
With this choice of generators of $\HH^*(A)$, we compute that
\[
 [F_m,F_n] =
 	\begin{cases}
 		 (m-n)F_{m+n}, & \text{ for $m,n$ even,}\\
  		 0, & \text{ for $m,n$ odd,}\\
 		 m F_{m+n}, & \text{ for $n$ even, $m$ odd.}		
 	\end{cases}
 	\]
This coincides with the formulas obtained in~\cite{ReRo}. However,
observe that since we are using the natural grading in $\Der(B)$,
the derivations in odd degree represent elements of even degree in
$\HH^*(A)$, and those of even degree represent elements of odd
degree in $\HH^*(A)$, which explains the shift in our formulas.

Note that since $B$ has no quadratic part
in its differential, the cup product
structure in $\HH^*(A)$ is trivial. This means,
in particular, that the Gerstenhaber algebra structure on $\HH^*(A)$ is independent of the
parameter $r$, but one can check that the
higher products can be used to distinguish them:
the $A_\infty$-structures on Hochschild cohomology
are not quasi-isomorphic for distinct parameters,
which shows in particular that these algebras
are not derived equivalent; that is, their
derived categories are not equivalent.

We now observe that $\HC_*(A)$ is
easily computable: it is nothing but the homology
of the abelianization of $B$, and this has
a simple description.
Indeed, since $B$ is
quasi-free, the space $B/[B,B]$ is
spanned by equivalence classes of cyclic words in
$B$ with respect to cyclic shifts. Moreover,
the differential of $B$ in~\ref{eqn:diffcrown}
lands in $[B,B]$: this follows from
the fact that the non-cyclic arrow $\varepsilon_n$ lies in the commutator
subspace, and the differential preserves it, hence we deduce that $\HC_*(A)=B/[B,B]$.

\medskip

\noindent\textit{A non-$3$-Koszul algebra.}
Let us consider the following quiver $Q$ with relations $R = \{xy^2,y^2z\}$.
We will compute its minimal model and with it its Hochschild cohomology,
including the bracket. We will also compute the cup product;
since the coproduct on $\Tor_A(\kk,\kk)$ is non-vanishing only on the
generator which we call $\Gamma$, this computation is
 straightforward.

By the main result in~\cite{Tamaroff}, the algebra
$ A = \kk Q/(xy^2,y^2z)$ has minimal model~$B$ given by the free
algebra over the semisimple $\kk$-algebra of vertices $\uk$ with set of homogeneous generators
$ \{ x,y,z,\alpha,\beta,\Gamma,\Lambda \} $
such that
\begin{align*}
 &\partial x=\partial y=\partial z=0, \quad
	\partial \alpha = xy^2,\quad \partial \beta = y^2z, \quad
	\partial\Gamma = \alpha z-x\beta,\quad \partial\Lambda = xy\beta-\alpha yz. \end{align*}
		
\noindent Here $\Gamma$ corresponds to the overlap $xy^2z$ while
$\Lambda$ corresponds to the overlap $xy^3z$, and~$\alpha$
and $\beta$ {correspond}{} to the relations they cover under the
differential $\partial $ of the model, so that $x,y$ and $z$ are
in degree $0$, $\alpha$ and $\beta$ in degree $1$, and
$\Gamma$ and $\Lambda$ in degree~$2$. Thus~$B$ is the
path algebra of the dg quiver in Figure 2.

\begin{figure}[ht]
\centering
$\vcenter{\hbox{
\begin{tikzpicture}[scale=0.8]
\node (e3) at (0,0){$\bullet$};
\node (e2) at (2,0){$\bullet$};
\node (e1) at (4,0){$\bullet$};
\node (x) at (2.9,-.35){$x$};
\node (z) at (.9,-.35){$z$};
\node (y) at (2,1.40){$y$};
\draw[-latex,line width = 1.15 pt] (e3)--(e2);
\draw[-latex,line width = 1.15 pt] (e2)--(e1);
\draw[-latex,line width = 1.15 pt] (e2) to [out=60,
				in=120,
				looseness=10] (e2);
\end{tikzpicture}}}$
\qquad\qquad
$\vcenter{\hbox{
\begin{tikzpicture}[yscale=0.6]
\node (e3) at (0,0){$\bullet$};
\node (e2) at (2,2){$\bullet$};
\node (g) at (2,.35){$\Gamma$};
\node (l) at (2,-.85){$\Lambda$};
\node (e1) at (4,0){$\bullet$};
\node (x) at (3.8,1.25){$x$};
\node (z) at (.2,1.25){$z$};
\node (alpha) at (2.8,0.65){$\alpha$};
\node (beta) at (1.2,0.65){$\beta$};
\node (y) at (2,3.35){$y$};
\draw[-latex,line width = 1.15 pt] (e3) to[bend left = 25](e2);
\draw[-latex,line width = 1.15 pt] (e3) to[bend left = 0](e2);
\draw[-latex,line width = 1.15 pt] (e3) to[bend right = 20](e1);
\draw[-latex,line width = 1.15 pt] (e3) to[bend left = 0](e1);
\draw[-latex,line width = 1.15 pt] (e2) to[bend left = 25](e1);
\draw[-latex,line width = 1.15 pt] (e2) to[bend right =0](e1);
\draw[-latex,line width = 1.15 pt] (e2) to [out=60,
				in=120,
				looseness=10] (e2);
\end{tikzpicture}}}$
\caption{A quiver and its dg resolution with
respect to the relations $R$}
\end{figure}
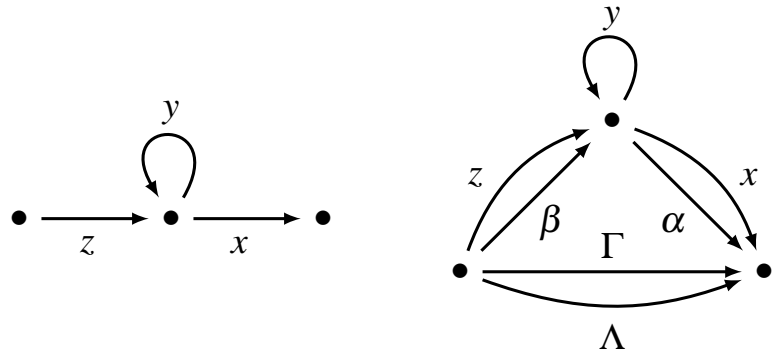

The elements in $B$ are the following,
where $r,t$ are elements of $\{0,1\}$ and $s\in\NN_0$:
\begin{itemize}
\item \emph{degree zero:} $x^ry^sz^t$,
\item \emph{degree one:} $\alpha y^s z^t,x^r y^t\beta$,
\item \emph{degree two:} $\Gamma,\Lambda,\alpha y^s \beta$.
\end{itemize}
Since we know that $\Tor_A^{\geqslant 4}(\kk,\kk)$ is zero, it follows
that $\HH^{\geqslant 4}(A)$ is zero. Moreover, it is straightforward
to see that $\HH^0(A) = Z(A)$ has basis
\[\{xyz,xz,y^2,y^3,
\ldots\},\] so we may focus our attention
on derivations of degree $0$, $-1$ and $-2$ to obtain bases for the remaining groups
$\HH^1(A)$, $\HH^2(A)$ and $\HH^3(A)$.

The following is a basis of derivations for the
$\uk$-bilinear $0$-cycles, where $s\in\NN_0$, and we adopt the
convention that $\alpha y^{-1}\beta = \Lambda$ and $\alpha y^{-2}
\beta=\Gamma$:
\begin{align*}
	&E_s(x) = 0,  &
	&E_s(y) =y^{s+1},  &
	&E_s(z) = 0,  &
	&E_s(\alpha) = 2\alpha y^s,\\
	&E_s(\beta) = 2y^s\beta{}{,} &
	&E_s(\Lambda) = 3\alpha y^{s-1} \beta, &
	&E_s(\Gamma) = -2\alpha y^{s-2}\beta, \\
	&F_s(x) = xy^s,  &
	&F_s(y) =0,  &
	&F_s(z) = 0,  &
	&F_s(\alpha) = \alpha y^s,\\
	&F_s(\beta) = 0{}{,} &
	&F_s(\Lambda) = \alpha y^{s-1} \beta, &
	&F_s(\Gamma) = -\alpha y^{s-2}\beta, \\
	&G_s(x) = 0,  &
	&G_s(y) = 0 ,  &
	&G_s(z) = y^sz,  &
	&G_s(\alpha) = 0 ,\\
	&G_s(\beta) = y^s\beta, &
	&G_s(\Lambda) = \alpha y^{s-1} \beta, &
	&G_s(\Gamma) = -\alpha y^{s-2}\beta.
\end{align*}

We now compute the $\uk$-bilinear $0$-boundaries. A basis for
them is given by the following family of derivations, where $s\in\NN_0$:
\begin{align*}
	&T_s(x) = xy^{s+2}, &
	&T_s(y) = 0, &
	&T_s(z) =0,&
	&T_s(\alpha) = \alpha y^{s+2}, \\
	&T_s(\beta) = 0, &
	&T_s(\Lambda) = \alpha y^{s+1}\beta, &
	&T_s(\Gamma) = -\alpha y^s \beta, \\
	&R_s(x) = 0, &
	&R_s(y) = 0, &
	&R_s(z) = y^{s+2}z, &
	&R_s(\alpha) = 0, \\
	&R_s(\beta) = y^{s+2}\beta, &
	&R_s(\Lambda) = -\alpha y^{s+1}\beta, &
	&R_s(\Gamma) = \alpha y^s\beta{}{.}
\end{align*}
By direct inspection, we have that $F_{s+2} = T_s,G_{s+2} = R_s$ for
$s\in\NN_0$, and no other relations, so that $H^0(\Der(B))$
has infinite dimension and is spanned the classes of the elements in the
set $\{F_0,F_1,G_0,G_1,E_s : s\in \NN_0\}$. Moreover, a basis for
$H^0(B)$ is of course given by the monomials belonging to $A$.
We may only worry about the adjoint action on $B$ of $y$
and $e_2$, since they are the only ones that are $\uk$-bilinear.
We have that
\begin{align*}
	&\Ad_y(x) = -xy, &
	&\Ad_y(y) = 0, &
	&\Ad_y(z) = yz, \\
	&\Ad_y(\alpha) = -\alpha y, &
	&\Ad_y(\beta) = y\beta, &
	&\Ad_y(\Lambda) =0, &
	&\Ad_y(\Gamma) = 0{}{,}
\end{align*}
so that $F_1 + \Ad_y = G_1$. Similarly, $F_0 + \Ad_{e_2}=G_0$,
so that $\HH^1(A)$ is infinite dimensional with basis the classes in $\{F_0,G_0,E_s:n\in\NN_0\}$. Moreover, for each $s,t\in \NN$,
\[ [E_s,E_t] = (s-t)E_{s+t},\] and that $[F_0,-]$ and $[G_0,-]$
are identically zero. This determines $\HH^1(A)$
as a Lie algebra: it consists of abelian algebra $\kk^2$ acting trivially on the Witt algebra.

The following derivations form a basis of the $1$-cycles
in $\Der(B)$, where unspecified values are zero,
$s\in \NN_0$, and we agree that $y^{-1}=y^{-2}=0$:
\begin{align*}
	&\Phi_s(\alpha) =   xy^s,\quad &
	&\Phi_s(\beta) = y^s z,	\quad&
	&\Phi_s(\Lambda) = \alpha y^{s-1}z,\quad&
	&\Phi_s(\Gamma) =-\alpha y^{s-2}z, \\
	&\Phi_s'(\alpha) =   0,\quad &
	&\Phi_s'(\beta) = y^{s+2}z,	\quad&
	&\Phi_s'(\Lambda) = -\alpha y^{s+1}z,\quad&
	&\Phi_s'(\Gamma) =\alpha y^sz, \\
	&\Pi_s(\alpha) =   0,\quad &
	&\Pi_s(\beta) = y^{s+2},	\quad&
	&\Pi_s(\Lambda) = \alpha y^{s+1},\quad&
	&\Pi_s(\Gamma) =\alpha y^s, \\
	&\Pi_s'(\alpha) =   xy^sz,\quad &
	&\Pi_s'(\beta) = 0,	\quad&
	&\Pi_s'(\Lambda) =0,\quad&
	&\Pi_s'(\Gamma) =0,\\
	&\Psi_s(\alpha) = 0, \quad&
	&\Psi_s(\beta) = y^{s+2}z, \quad&
	&\Psi_s(\Lambda) = -\alpha y^{s+1} z,\quad &
	&\Psi_s(\Gamma) = xy^s\beta, \\
	&\Theta_s(\alpha) = 0, \quad&
	&\Theta_s(\beta)= 0, \quad&
	&\Theta_s(\Lambda) = \alpha y^s z - xy^s\beta,\quad &
	&\Theta_s(\Gamma) = 0, \quad\\
	&\Xi_s(\alpha) = 0, \quad&
	&\Xi_s(\beta)= 0, \quad&
	&\Xi_s(\Lambda) = 0, \quad&
	&\Xi_s(\Gamma) = \Theta_s(\Lambda).
\end{align*}

Let us now compute the $1$-boundaries of $\Der(B)$. We observe
that for every $s\in\NN_0$, the elements
$\Xi_s,\Phi_s',\Psi_s,\Theta_s$ have zero
projection onto $A$ under $\alpha {}{\colon} B\longrightarrow A$, so these are boundaries. It is easy to check that the following set of
derivations completes the list of $1$-boundaries, where $s\in \NN_0$:
\[\renewcommand{\arraystretch}{1.25}\begin{array}{@{}r@{\ }l@{\hspace{15pt}}r@{\ }l@{\hspace{15pt}}r@{\ }l@{\hspace{15pt}}r@{\ }l@{}}
	X_s(\alpha) &= x y^{s+2}, &
	X_s(\beta) &= 0{}{,}  &
	X_s(\Lambda) &= x y^{s+1} \beta, &
	X_s(\Gamma) &= -xy^s \beta{}{,} \\
	Y_s(\alpha) &= 2xy^{s+1}, &
	Y_s(\beta) &= 2y^{s+1}z,  &
	Y_s(\Lambda) &= xy^s\beta-\alpha y^s z,   &
	Y_s(\Gamma) &= 0{}{.}
\end{array}\]
Moreover, we have that $\alpha(\Phi_{s+2}+\Phi_s')=0$, so $\Phi_{s+2}$
is a boundary for $s\in\NN_0$. It follows that a basis of
$H^1(\Der(B))$ is given by the classes of the derivations
$\Phi_0,\Phi_1$ so that $\HH^2(A)$ is two dimensional.

Every derivation of degree $-2$ is
a cycle and vanishes on every generator except, possibly,
$\Lambda$ and $\Gamma$, so that a basis for the $2$-cycles is
given by the following family of derivations, where $s\in\NN_0$ and
$t\in\{0,1\}$:
\[\begin{array}{@{}r@{\ }l@{\qquad\qquad}r@{\ }l@{\qquad\qquad}r@{\ }l@{\qquad\qquad}r@{\ }l@{}}
\Omega_s^t(\Lambda) &=0, &  \Omega_s^t(\Gamma) &= xy^sz^t, &
\Uupsilon_s^t(\Lambda) &= xy^sz^t, & \Uupsilon_s^t(\Gamma) &=0.
\end{array}\]
It is straightforward to check that all of these are boundaries
except for $\Upsilon_0^1$ and $\Upsilon_0^0$. More precisely, the
following is a complete list of the $2$-boundaries, where $s\in\NN_0$
and $t\in\{0,1\}$:
\[\begin{array}{@{}r@{\ }l@{\qquad\quad}r@{\ }l@{\qquad\quad}r@{\ }l@{\qquad\quad}r@{\ }l@{}}
\Omega_s^t(\Lambda) &=0, & \Omega_s^t(\Gamma) &= xy^sz^t, &
\Uupsilon_{s+1}^t(\Lambda) &= xy^{s+1}z^t,
& \Uupsilon_{s+1}^t(\Gamma) &=0.
\end{array}\]
From this it follows that $\HH^3(A)$ is also
two dimensional.
Finally, we compute the Gerstenhaber algebra structure.
We already determined the bracket in $\HH^1(A)$, while the bracket in
$\HH^2(A)$ is trivial, since both generators vanish on $\Lambda$ and
$\Gamma$.
The action of $\HH^1(A)$ on $\HH^2(A)$ and $\HH^3(A)$ is as follows, where
$s,t\in\NN_0$ and $r\in\{0,1\}$.
\[\renewcommand{\arraystretch}{1.35}\begin{array}{@{}r@{\ }l@{\qquad\qquad}r@{\ }l@{}}
[E_{s+2},\Phi_t] &= 3\Xi_{s+t+1}-2\Theta_{s+t},&
[F_{s+2},\Phi_t] &= \Theta_{t+s+1}-\Xi_{t+s},\\
{}[G_{s+2},\Phi_t] &= \Theta_{t+s+2}-\Xi_{t+s+2}, &
 [F_1,\Phi_t] &= [G_1,\Phi_t]  = \Theta_t,  \\
{} [E_s,\Uupsilon_t^r] &= (t-3\delta{\!}_{s,0}) \Uupsilon_{s+t}^r{}{,} &
 [E_s,\Omega_t^r] &= (t+2\delta{\!}_{s,0}) \Omega_{s+t}^r, \\
{} [F_0,-] & \multicolumn{3}{@{}l}{=[G_0,-] =  2{}{,} 
 \text{ on } \langle \Omega_s^t,{\Omega'}^t_s:s\in\NN_0 \rangle,} \\
{} [F_0,-] & \multicolumn{3}{@{}l}{=[G_0,-] =  [E_0,-] = 0{}{,} 
\text{ on } \langle \Uupsilon_s^t,{\Uupsilon'}^t_s,\Phi_s:s\in\NN_0\rangle,}\\
{} [E_1,\Phi_t] &= 3\Xi_t.
\end{array}\]

 To compute cyclic and Hochschild homology of $A$,
we begin by noting that for $i\in \NN$, we have that
$[B,B]_i = B_i$, and that
$(B/[B,B])_0$ is generated
by the classes of $y^j$ for $j\in\NN_0$.
 This means that $\HC_*(A)$
is concentrated in degree zero where it coincides with
$A/[A,A] = \kk[\bar{y}]$. The long exact sequence shows that $\HH_*(A)$ is trivial in degrees larger than
$1$, and that $d{}{\colon} \HH_0(A)\to \HH_1(A)$ is
an isomorphism.

\textit{The case of dg algebras that are not `aspherical'.}
In the
above, we focused our attention on cofibrant algebras $B$ for which
$H_*(B)$ is concentrated in degree zero. It is important to point
out the work done here works equally well if $B$ is only assumed
to be cofibrant and non-negatively graded. Indeed, the only
crucial property we used is that $B$ is free as an associative
algebra, so that the complexes of fields and forms compute what
they are supposed to.

\begin{bibdiv}
\begin{biblist}[\normalsize]
\bib{Hoss}{article}{
   author={Abbaspour, H.},
   title={On algebraic structures of the Hochschild complex},
   conference={
      title={Free loop spaces in geometry and topology},
   },
   book={
      series={IRMA Lect. Math. Theor. Phys.},
      volume={24},
      publisher={Eur. Math. Soc., Z\"{u}rich},
   },
   date={2015},
   pages={165--222},
}
\bib{KellArm}{article}{
   author={Armenta, M. A.},
   author={Keller, B.},
   title={Derived invariance of the cap product in Hochschild theory},
   journal={C. R. Math. Acad. Sci. Paris},
   volume={355},
   date={2017},
   number={12},
   pages={1205--1207},
   issn={1631-073X},
}
\bib{Calc}{article}{
   author={Armenta, M. A.},
   author={Keller, B.},
   title={Derived invariance of the Tamarkin{}{--}Tsygan calculus of an algebra},
   journal={C. R. Math. Acad. Sci. Paris},
   volume={357},
   date={2019},
   number={3},
   pages={236--240},
   issn={1631-073X},
   }

\bib{Arnold}{article}{
  doi = {10.1007/bf01098313},
  url = {https://doi.org/10.1007/bf01098313},
  year = {1969},
  publisher = {Springer Science and Business Media {LLC}},
  volume = {5},
  number = {2},
  pages = {138--140},
  author = {V. I. Arnol{\textquotesingle}d},
  title = {The cohomology ring of the colored braid group},
  journal = {Mathematical Notes of the Academy of Sciences of the {USSR}}
	}
\bib{Alexander}{article}{
   author={Berglund, A.},
   author={Borjeson, K.},
   title={Koszul $A_\infty$-algebras and free loop space homology},
   journal={Proc. Edin. Math. Soc.},
   volume={63},
      number={1},
   date={2020},
   pages={37--65},
}
\bib{Daletskii}{article}{
   author={Daletski\u{\i}, Yu. L.},
   author={Gel\cprime fand, I. M.},
   author={Tsygan, B. L.},
   title={On a variant of noncommutative differential geometry},
   language={Russian},
   journal={Dokl. Akad. Nauk SSSR},
   volume={308},
   date={1989},
   number={6},
   pages={1293--1297},
   issn={0002-3264},
   translation={
      journal={Soviet Math. Dokl.},
      volume={40},
      date={1990},
      number={2},
      pages={422--426},
      issn={0197-6788},
   },
}
\bib{TTD}{article}{
   author={Dolgushev, V.},
   author={Tamarkin, D.},
   author={Tsygan, B.},
   title={Formality of the homotopy calculus algebra of Hochschild (co)chains},
   date={2008},
   eprint = {arXiv:0807.5117},
}
\bib{Dotsenko2020a}{article}{
  doi = {10.1007/s10485-020-09591-0},
  url = {https://doi.org/10.1007/s10485-020-09591-0},
  year = {2020},
  publisher = {Springer Science and Business Media {LLC}},
  volume = {28},
  number = {4},
  pages = {595--600},
  author = {V. Dotsenko},
  title = {Word operads and admissible orderings},
  journal = {Applied Categorical Structures}
}
\bib{Dotsenko2020}{article}{
  doi = {10.1093/imrn/rnz369},
  url = {https://doi.org/10.1093/imrn/rnz369},
  year = {2020},
  publisher = {Oxford University Press ({OUP})},
  author = {Vladimir Dotsenko and Pedro Tamaroff},
  title = {Endofunctors and Poincar{\'{e}}{\textendash}Birkhoff{\textendash}Witt Theorems},
  journal = {International Mathematics Research Notices}
}
\bib{FMT}{article}{
   author={F\'elix, Y.},
   author={Menichi, L.},
   author={Thomas, J.-C.},
   title={Gerstenhaber duality in Hochschild cohomology},
   journal={J. Pure Appl. Algebra},
   volume={199},
   date={2005},
   number={1-3},
   pages={43--59},
   issn={0022-4049},
}
\bib{BV}{article}{
   author={G\'{a}lvez-Carrillo, I.},
   author={Tonks, A.},
   author={Vallette, B.},
   title={Homotopy Batalin{}{--}Vilkovisky algebras},
   journal={J. Noncommut. Geom.},
   volume={6},
   date={2012},
   number={3},
   pages={539--602},
   issn={1661-6952},
}
\bib{Gers}{article}{
   author={Gerstenhaber, M.},
   title={The cohomology structure of an associative ring},
   journal={Ann.\ of Math. (2)},
   volume={78},
   date={1963},
   pages={267--288},
   issn={0003-486X},
}
\bib{BraceOp}{article}{
   author={{Gerstenhaber}{}, M.},
      author={Voronov, A. A.},
   title={Higher-order operations on the Hochschild complex},
   language={Russian, with Russian summary},
   journal={Funktsional. Anal.\ i Prilozhen.},
   volume={29},
   date={1995},
   number={1},
   pages={1--6, 96},
   issn={0374-1990},
   translation={
      journal={Funct. Anal. Appl.},
      volume={29},
      date={1995},
      number={1},
      pages={1--5},
      issn={0016-2663},
   },
}
\bib{GetJon}{article}{
	author = {Getzler, E.},
	author = {Jones, J.~D.~S.},
    title = {Operads, homotopy algebra and iterated integrals for double loop spaces},
      journal = {arXiv e-prints},
         date = {1994},
        eprint = {hep-th/9403055},
        }
\bib{WitBr}{article}{
   author={Grimley, L.},
   author={Nguyen, V. C.},
   author={Witherspoon, S.},
   title={Gerstenhaber brackets on Hochschild cohomology of twisted tensor products},
   eprint = {arXiv:1503.03531},
   date={2015},
   pages={28},
}
\bib{Hers}{article}{
   author={Herscovich, E.},
   title={Hochschild (co)homology of Koszul dual pairs},
   journal={J. Noncommut. Geom.},
   volume={13},
   date={2019},
   number={1},
   pages={59--85},
   issn={1661-6952},
}
\bib{Hinich}{article}{
   author={Hinich, V.},
   title={Homological algebra of homotopy algebras},
   journal={Comm. Algebra},
   volume={25},
   date={1997},
   number={10},
   pages={3291--3323},
   issn={0092-7872},
}
\bib{HinichTam}{article}{
   author={Hinich, V.},
   title={Tamarkin's proof of Kontsevich formality theorem},
   journal={Forum Math.},
   volume={15},
   date={2003},
   number={4},
   pages={591--614},
   issn={0933-7741},
}
\bib{KellerMod}{article}{
   author={Keller, B.},
   title={Notes on minimal models},
   eprint = {https://webusers.imj-prg.fr/~bernhard.keller/publ/minmod.pdf},
   date={2003},
   pages={4},
}
\bib{DIH}{article}{
   author={Keller, B.},
   title={Derived invariance of higher structures on the Hochschild Complex},
   date={2003},
   pages={16},
   eprint={https://webusers.imj-prg.fr/~bernhard.keller/publ/dih.pdf}
}
\bib{KellInv1}{article}{
   author={Keller, B.},
   title={Hochschild cohomology and derived Picard groups},
   journal={J. Pure Appl. Algebra},
   volume={190},
   date={2004},
   number={1-3},
   pages={177--196},
   issn={0022-4049},
}
\bib{KhaKho}{article}{
 title={Gr\"obner Bases for Coloured Operads}, 
      author={Vladislav Kharitonov and Anton Khoroshkin},
      year={2020},
      eprint={arXiv:math/2008.05295},
      archivePrefix={arXiv},
      primaryClass={math.CT},
      }
\bib{Anton}{article}{
   author={Khoroshkin, A.},
	title={PBW property for associative universal 	
	enveloping algebras over an operad},
    year={2018},
    eprint={arXiv:math/1807.05873},
    }
\bib{KS}{article}{
   author={Kontsevich, M.},
   author={Soibelman, Y.},
   title={Notes on $A_\infty$-algebras, $A_\infty$-categories and non-commutative geometry I},
   eprint = {arXiv:math/0606241},
   date={2008},
   pages={70},
}
\bib{Loday}{book}{
   author={Loday, J.-L.},
   title={Cyclic homology},
   series={{}{Grundlehren Math. Wiss.} },
   volume={301},
   note={Appendix E by Mar\'\i a O. Ronco},
   publisher={Springer-Verlag, Berlin},
   date={1992},
   pages={xviii+454},
   isbn={3-540-53339-7},
}
\bib{LV}{book}{
   author={Loday, J.-L.},
   author={Vallette, B.},
   title={Algebraic operads},
   series={{}{Grundlehren Math. Wiss.}}, 
   volume={346},
   publisher={Springer, Heidelberg},
   date={2012},
   pages={xxiv+634},
   isbn={978-3-642-30361-6},
}
\bib{McCleary}{book}{
   author={McCleary, J.},
   title={A user's guide to spectral sequences},
   series={{}{Camb. Stud. Adv. Math.}},
   volume={58},
   edition={2},
   publisher={Cambridge University Press, Cambridge},
   date={2001},
   pages={xvi+561},
   isbn={0-521-56759-9},
}
\bib{NegWit}{article}{
   author={Negron, C.},
   author={Witherspoon, S.},
   title={An alternate approach to the Lie bracket on Hochschild cohomology},
      journal={Homotopy Homology Appl.},
   volume={18},
   date={2016},
   number={1},
   pages={265--285},

}
\bib{NegWitBr}{article}{
   author={Negron, C.},
   author={Witherspoon, S.},
   title={The Gerstenhaber bracket as a Schouten bracket for polynomial rings extended by finite groups%
},
   eprint = {arXiv:1511.02533},
   date={2015},
   pages={26},
}
\bib{ReRo}{article}{
   author={Redondo, M. J.},
   author={Rom\'an, L.},
   title={Comparison Morphisms Between Two Projective Resolutions of Monomial Algebras},
   journal = {Revista de la Uni\'on Matem\'atica Argentina},
   date={2018},
   pages={1--31},
}
\bib{Rezk}{article}{
   author={Rezk, C. W.},
   title={Spaces of algebra structures and cohomology of operads},
   publisher = {Massachusetts Institute of Technology},
   eprint = {https://dspace.mit.edu/handle/1721.1/41793},
   date={1996},
   pages={86},
}

\bib{Braces}{article}{
   author={Smith, J. H.},
   author= {McClure, J. E.},
   title={A solution of Deligne's conjecture},
   eprint = {arxiv.math/9910126},
   date={1999},
   pages={46},
}
\bib{Sheff}{article}{
   author={Stasheff, J.},
   title={The intrinsic bracket on the deformation complex of an associative
   algebra},
   journal={J. Pure Appl. Algebra},
   volume={89},
   date={1993},
   number={1-2},
   pages={231--235},
   issn={0022-4049},
}
\bib{MSA}{article}{
   author={Su\'arez-\'Alvarez, M.},
   title={A little bit of extra functoriality for Ext and the computation of the Gerstenhaber bracket},
   journal={J. Pure Appl. Algebra},
   volume={221},
   date={2017},
   number={8},
   pages={1981--1998},
   issn={0022-4049},
}
\bib{QuillenCyclic}{article}{
   author={Quillen, D.},
   title={Cyclic cohomology and algebra extensions},
   journal={$K$-Theory},
   volume={3},
   date={1989},
   number={3},
   pages={205--246},
   issn={0920-3036},
}
\bib{TT}{article}{
   author={Tamarkin, D.},
   author={Tsygan, B.},
   title={The ring of differential operators on forms in noncommutative
   calculus},
   conference={
      title={Graphs and patterns in mathematics and theoretical physics},
   },
   book={
      series={Proc. Sympos. Pure Math.},
      volume={73},
      publisher={Amer. Math. Soc., Providence, RI},
   },
   date={2005},
   pages={105--131},
}
\bib{TT2}{article}{
   author={Tamarkin, D.},
   author={Tsygan, B.},
   title={Noncommutative differential calculus, homotopy BV algebras and
   formality conjectures},
   journal={Methods Funct. Anal. Topology},
   volume={6},
   date={2000},
   number={2},
   pages={85--100},
   issn={1029-3531},
}
\bib{DeligneTam}{article}{
   author={Tamarkin, D.},
   title={Another proof of M. {Kontsevich}{} formality theorem for $\mathbb R^n$	},
   eprint = {arXiv:9803025},
   date={1998},
   pages={24},
}
\bib{Tamaroff}{article}{
   author={Tamaroff, P.},
   title={Minimal models for monomial algebras},
   journal={Homotopy Homology Appl.},
   volume={23},
   date={2021},
   number={1},
   pages={341--366},
}

\bib{HTHA}{article}{
   author={Vallette, B.},
   title={Homotopy theory of homotopy algebras},
   date={2014},
   pages={32},
   eprint = {arXiv:1411.5533},
}
\bib{Vol}{article}{
   author={Volkov, Yu.},
   title={Gerstenhaber bracket on the Hochschild cohomology via an arbitrary resolution},
   date = {2016},
   eprint = {arXiv:1610.05741},
}
\bib{ZimmerRem}{article}{
   author={Zimmermann, A.},
   title={Batalin{}{--}Vilkovisky algebras, Tamarkin{--}Tsygan calculus and algebras
   with duality; the case of Frobenius algebras},
   conference={
      title={Proceedings of the 47th Symposium on Ring Theory and
      Representation Theory},
   },
   book={
      publisher={Symp. Ring Theory Represent. Theory Organ. Comm., Okayama},
   },
   date={2015},
   pages={146--153},
}
\end{biblist}
\end{bibdiv}

\end{document}